\theoremstyle{plain}
\newtheorem{theorem}{Theorem}[section]
\newtheorem{proposition}[theorem]{Proposition}
\newtheorem{lemma}[theorem]{Lemma}
\theoremstyle{definition}
\newcommand{\appsection}[1]{\let\oldthesection\thesection
\renewcommand{\thesection}{Appendix \oldthesection}
\section{#1}\let\thesection\oldthesection}
\theoremstyle{remark}
\newtheorem{remark}[theorem]{Remark}
\def\Z{{\mathbb{Z}}}
\def\Q{{\mathbb{Q}}}
\def\C{{\mathbb{C}}}
\def\P{{\mathbb{P}}}
\def\O{{\mathcal{O}}}
\newcommand{\Proj}{\operatorname{Proj}}
\newcommand{\HHom}{\mathcal{H}om}
\begin{document}
\bibliographystyle{amsplain}

\title{On degenerations of $\Z/2$-Godeaux surfaces}
\author{\textrm{Eduardo Dias, Carlos Rito, Giancarlo Urz\'ua}}
\date{}

\maketitle

\begin{abstract} 
We compute equations for the Coughlan's family in \cite{CoughlanGodeaux} of Godeaux surfaces with torsion $\Z/2$, which we call $\Z/2$-Godeaux surfaces, and we show that it is (at most) 7 dimensional. We classify non-rational KSBA degenerations $W$ of $\Z/2$-Godeaux surfaces with one Wahl singularity, showing that $W$ is birational to particular either Enriques surfaces, or $D_{2,n}$ elliptic surfaces,
with $n=3,4$ or $6$. We present examples for all possibilities in the first case, and for $n=3,4$ in the second.
\end{abstract}

\tableofcontents

\section{Introduction} \label{s0}

Smooth minimal complex projective surfaces of general type with the lowest possible numerical invariants,
namely geometric genus $p_g=0$ and self-intersection of the canonical divisor $K^2=1$,
are known to exist since Godeaux' construction in 1931 \cite{God31}. His surface has topological fundamental group $\mathbb Z/5$. Surfaces of general type with $K^2=1, p_g=0$ are called numerical Godeaux surfaces. Miyaoka \cite{Miy76} showed that the order of their torsion group is at most 5, and Reid \cite{Rei78} excluded the case $(\mathbb Z/2)^2$, so
their possible torsion groups are $\mathbb Z/n$ with $1\leq n\leq 5$. All of them are realizable. Reid constructed the moduli space for the cases $n=5,4,3$,
and it follows from his work that the topological fundamental group coincides with the torsion group for $n=5,4$.
Urz\'ua and Coughlan \cite{CU18} showed that the same happens for $n=3$. In those three cases, the moduli space is unirational and irreducible of dimension 8. Reid conjectured that the same should happen for
numerical Godeaux surfaces with torsion $\mathbb Z/2$ and with no torsion. Both cases remain a challenge as far as we know; it is not even known if the topological fundamental groups are indeed $\Z/2$ and trivial in these two cases. Several authors have worked on these surfaces, and there are some unrelated constructions of some components of the moduli space. (See e.g. \cite{CP00}, \cite[\S 6]{CataneseY}, \cite{BCP11} for a survey on $p_g=0$ surfaces and various references, \cite{RTU17}). In the case of $\Z/2$-Godeaux surfaces, Catanese and Debarre \cite{CataneseY} show that their \'etale double covers
are surfaces with birational bicanonical map and hyperelliptic canonical curve,
and they do a general study of its canonical ring.
Coughlan \cite{CoughlanGodeaux} gives the construction of a family depending on 8 parameters.

In this paper, we implement Coughlan's construction, overcoming some computational difficulties,
and we obtain explicit equations for his family of surfaces.
We show that it depends on at most $7$ parameters, so the problem of classification of $\mathbb Z/2$-Godeaux surfaces is still wide open.
(We recall that deformation theory has been used to show the existence of 8-dimensional components of $\mathbb Z/2$-Godeaux surfaces, see e.g. \cite{Werner8dim}, \cite{KLP12}, and Remark \ref{qgor}.)
Each surface is embedded in the projective space $\mathbb P(1,2,2,2,3,3,3,3,4)$,
and we give equations for the embedding by the tricanonical map into $\mathbb P^7$,
as well as the image by the bicanonical map, an octic surface in $\mathbb P^3$.
Moreover, we show that the \'etale coverings of Coughlan's surfaces belong to the 16-dimensional component
$\mathscr M_E$ described in \cite[\S 5]{CataneseY}, thus their topological fundamental group is $\mathbb Z/2$.

We also classify deformations to non-rational surfaces $W$ with a unique Wahl singularity, and ample canonical class. Hence these surfaces $W$ belong to the Koll\'ar-Shepher-Barron--Alexeev (KSBA) compactification of the moduli space of Godeaux surfaces \cite{KSB88,A94} (see \cite{Hack11}). The relevance of stable surfaces with one Wahl singularity is that, under no obstructions in deformation, they represent boundary divisors in the KSBA compactification (see \cite[\S 9]{Hack11}), and they are abundant (see e.g. \cite{SU16}). What allows us to classify is the recent work \cite{RU19} which optimally bounds Wahl singularities in stable surfaces with one singularity. Using that work and the particular situation of degenerations of $\Z/2$-Godeaux surfaces, in this paper we show that the smooth minimal model of $W$ is a particular either Enriques surface or $D_{2,n}$ elliptic surface, with $n\in {3,4,6}$. We give a complete list of the geometric possibilities and the singularities that may occur.

The description of an Enriques surface as a double plane makes the construction of examples for this case simpler, and in fact we give constructions for all cases in the list. Although, we also use the explicit MMP in \cite{HTU17,U16} to show existence for some cases.

The case of $D_{2,n}$ elliptic surfaces is much harder, explicit constructions are difficult to obtain, so we take a different approach. We search for such degenerations using our equations of Coughlan's family of $\mathbb Z/2$-Godeaux surfaces.
The method is to study random surfaces, working over finite fields, in order to get ideas of where the interesting cases may be, then try
to construct it over the complex numbers. We explicitly obtain codimension 1 families of $D_{2,4}$ and $D_{2,3}$ elliptic surfaces (containing a $(-4)$-curve).
The existence of the first case can also be proved via MMP \cite{HTU17,U16} and we indicate how, but of course this is not explicit. That case appears in several constructions by deformations, suggesting that the irreducibility of the moduli space may hold.
The second case is more interesting because one can show
the existence of $D_{2,3}$ surfaces with a $(-4)$-curve inside, whose contraction can be $\Q$-Gorenstein smoothed to simply connected Godeaux surfaces (see e.g. \cite[\S 5]{U16}).

The paper is organized as follows. In Section \ref{ListPossibilities} we give a complete list of possibilities for the deformations to non-rational surfaces with one Wahl singularity that may occur. In Sections \ref{Enriques}, \ref{Realizations} we construct several examples of such degenerations:
all possible cases with $W$ an Enriques surface, with explicit constructions; and two different cases with $W$ a $D_{2,4}$ surface, using deformation theory and MMP.
In Section \ref{CoughlanFamily} we describe how to find explicit equations for Coughlan's family of $\mathbb Z/2$-Godeaux surfaces, using computer algebra,
and we show that it is (at most) 7-dimensional. Finally in Section \ref{CoughlanDnm} we explain how to find, in this family, equations for surfaces in the cases where $W$ is a $D_{2,4}$ or $D_{2,3}$ surface.

\subsubsection*{Notation}

\noindent  

\begin{itemize}


\item
A $(-m)$-curve is a curve isomorphic to $\P^1$ with self-intersection $-m$.

\item
A $D_{n,m}$ surface is a smooth projective surface with an elliptic fibration over $\mathbb P^1$ which has two fibres of multiplicities $n,m$, and $p_g=0$. The fundamental group of $D_{n,m}$ is $\Z/\text{gcd}(n,m)$ (see e.g. \cite[\S 3]{Dolg88}). 

\item
A $\Z/2$-Godeaux surface is a smooth minimal projective surface with $p_g=0$, $K^2=1$, and $\pi_1^{\text{\'et}}=\Z/2$ (which is equivalent to have torsion group $\Z/2$).

\item
If $\phi \colon X \to W$ is a birational morphism, then exc$(\phi)$ is the exceptional divisor.
The strict transform of an irreducible curve $\Gamma$ in $W$ will be denoted by $\Gamma$ again.

\item
A cyclic quotient singularity $Y$, denoted by $\frac{1}{m}(1,q)$, is a germ at the origin of the quotient of $\C^2$ by the action of
$\mu_m$ given by $(x,y)\mapsto (\mu_m x, \mu_m^q y)$, where $\mu_m$ is a primitive $m$-th root of $1$, and $q$ is an integer with $0<q<m$ and gcd$(q,m)=1$.
If $\sigma \colon \widetilde{Y} \rightarrow Y$ is the minimal resolution of $Y$, then the exceptional curves $E_i=\P^1$ of $\sigma$, with $1 \leq i \leq s$,
form a chain such that $E_i^2=-b_i$ where $ \frac{m}{q} = [b_1, \ldots ,b_s]$ is the Hirzebruch-Jung continued fraction.
Commonly we will refer to exc$(\sigma)$ as $[b_1, \ldots ,b_s]$.

\item
The Kodaira dimension of $X$ is denoted by $\kappa(X)$.

\item
A KSBA surface in this paper is a normal projective surface with log-canonical singularities and ample canonical class \cite{KSB88}.
\end{itemize}

\subsubsection*{Acknowledgments}

We thank Stephen Coughlan for useful conversations related to this paper. The first and second authors were supported by FCT (Portugal) under the project PTDC/MAT-GEO/2823/2014
and by CMUP (UID/ MAT/00144/2019), which is funded by FCT with national (MCTES) and European structural funds through the programs FEDER,
under the partnership agreement PT2020. The second author was supported by the fellowship SFRH/BPD/111131/2015. The third author thanks FONDECYT for support from the regular grant 1190066. The second and third authors thank Pontificia Universidad Cat´\'olica de Chile and Universidade do Porto for the hospitality during their visits in December 2018 and February 2019.

\section{Non-rational degenerations with one Wahl singularity}

A Wahl singularity is a cyclic quotient singularity of the type $1/n^2(1,na-1)$ with $0<a<n$ coprime. Equivalently, they are precisely the cyclic quotient singularities which admit a smoothing with Milnor number equal to zero. KSBA surfaces with one Wahl singularity turn out to be abundant in the closure of the moduli space of surfaces of general type. When in addition there are no local-to-global obstructions, these surfaces represent divisors in the KSBA compactification (see \cite[\S 4]{Hack11}). In this section we classify all possible degenerations of $\Z/2$-Godeaux surfaces into non-rational KSBA surfaces with one Wahl singularity. The main tool is \cite{RU19}, where we can find explicit optimal bounds for Wahl singularities and some useful features for particularly ``small" cases.

\subsection{List of possibilities}\label{ListPossibilities}

\begin{figure}[htbp]
\centering
\includegraphics[width=13cm]{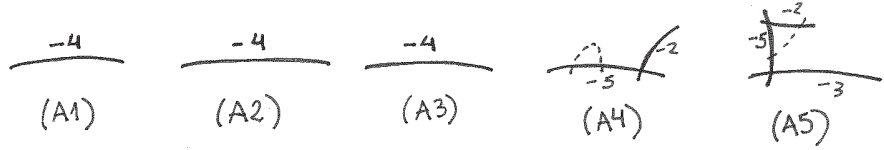}
\caption{Options for $\kappa=1$} \label{f1}
\end{figure}

\begin{figure}[htbp]
\centering
\includegraphics[width=13cm]{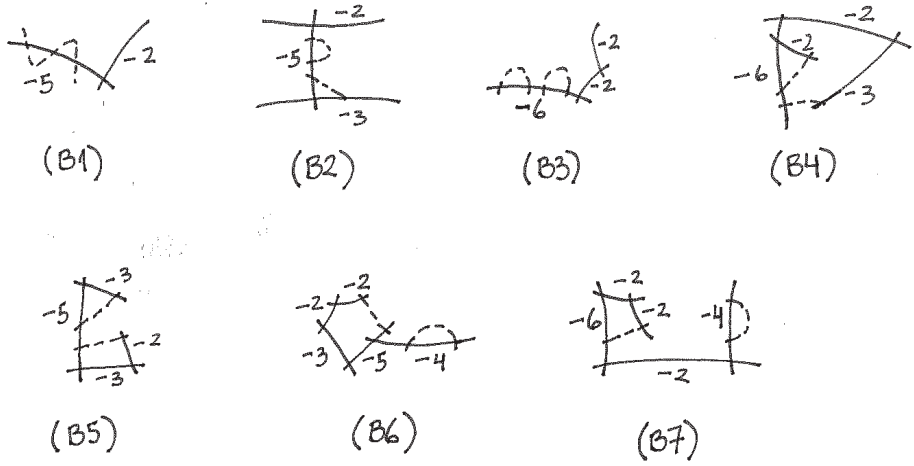}
\caption{Options for $\kappa=0$} \label{f2}
\end{figure}

\begin{theorem}
Let $W$ be a $\Q$-Gorenstein degeneration of a $\Z/2$-Godeaux surface which has one Wahl singularity and $K_W$ ample. If $\phi \colon X \to W$ is the minimal resolution and $X$ is not rational, then $X$ belongs to the following list:

\begin{itemize}
\item[A.] $\kappa(X)=1$

\begin{enumerate}
\item[$(1)$] The surface $X$ is a $D_{2,3}$, and exc$(\phi)=[4]$. 

\item[$(2)$] The surface $X$ is a $D_{2,6}$, and exc$(\phi)=[4]$.

\item[$(3)$] The surface $X$ is a $D_{2,4}$, and exc$(\phi)=[4]$.

\item[$(4)$] The surface $X$ is the blow-up at one point of a $D_{2,4}$, and exc$(\phi)=[5,2]$. The $(-1)$-curve intersects the $(-5)$-curve with multiplicity $2$.

\item[$(5)$] The surface $X$ is the blow-up of a $D_{2,4}$ twice at the node of the multiplicity four $I_1$ fiber, and exc$(\phi)=[3,5,2]$. The surface $D_{2,4}$ contains a $(-3)$-curve which is a $4$-section.

\end{enumerate}

\item[B.] $\kappa(X)=0$, and $X$ is an Enriques surface blown-up 

\begin{enumerate}
\item[$(1)$] once, and exc$(\phi)=[5,2]$. The $(-1)$-curve intersects the $(-5)$-curve with multiplicity $3$.

\item[$(2)$] twice, and exc$(\phi)=[2,5,3]$. There is one $(-1)$-curve touching the $(-5)$-curve with multiplicity $2$, and there is another $(-1)$-curve intersecting the $(-5)$-curve and the $(-3)$-curve at one point. 

\item[$(3)$] twice, and exc$(\phi)=[6,2,2]$. There are two disjoint $(-1)$-curves intersecting the $(-6)$-curve with multiplicity $2$ each.

\item[$(4)$] three times, and exc$(\phi)=[2,6,2,3]$. There is a $(-1)$-curve intersecting the first $(-2)$-curve and the $(-6)$-curve at one point each, and there is a $(-1)$-curve intersecting the $(-6)$-curve and the $(-3)$-curve at one point each.

\item[$(5)$] three times, and exc$(\phi)=[3,5,3,2]$. There is a $(-1)$-curve intersecting the first $(-3)$-curve and the $(-5)$-curve at one point each, and there is a $(-1)$-curve intersecting the $(-5)$-curve and the $(-2)$-curve at one point each.

\item[$(6)$] four times, and exc$(\phi)=[2,2,3,5,4]$. There is a $(-1)$-curve intersecting the first $(-2)$-curve and the $(-5)$-curve at one point each, and there is a $(-1)$-curve intersecting the $(-4)$-curve with multiplicity $2$.

\item[$(7)$] four times, and exc$(\phi)=[2,2,6,2,4]$. There is a $(-1)$-curve intersecting the first $(-2)$-curve and the $(-6)$-curve at one point each, and there is a $(-1)$-curve intersecting the $(-4)$-curve with multiplicity $2$.

\end{enumerate}

\end{itemize}
Moreover, all cases do exist, except possibly A$2$ and A$5$.
\label{possible}
\end{theorem}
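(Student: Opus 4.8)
The plan is to start from the birational geometry of $\Z/2$-Godeaux surfaces and push the numerical data through the MMP of $\Q$-Gorenstein degenerations. Let $W$ be such a degeneration with its single Wahl singularity of type $\frac1{n^2}(1,na-1)$, and let $\phi\colon X\to W$ be the minimal resolution, so that $\mathrm{exc}(\phi)$ is a chain of $\P^1$'s encoding the Hirzebruch–Jung continued fraction $\frac{n^2}{na-1}=[b_1,\dots,b_s]$. Since $W$ is a degeneration of a minimal surface of general type with $K^2=1$ and $p_g=q=0$, we have $K_W^2=1$, $p_g(W)=q(W)=0$, and $W$ has ample canonical class. The first step is to record the standard invariant identities for the resolution: $K_X^2 = K_W^2 - (\text{correction from the chain}) \le 1$ with $K_X^2$ dropping by one for each "extra" length of the chain beyond the minimal Wahl case, and $\chi(\O_X)=\chi(\O_W)=1$ since cyclic quotient singularities are rational. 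So $X$ is a smooth projective surface with $\chi(\O_X)=1$, $q(X)=0$ (the degeneration has $q=0$ and $b_1=0$ for a rational chain, so irregularity is preserved), and $p_g(X)=0$.

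Next I would invoke \cite{RU19} to bound the Wahl singularity. The key input is that the length $s$ of the chain and the integers $b_i$ are controlled by $K_X^2$ and by the fact that the chain must sit inside $X$ as a configuration of rational curves with the prescribed negative self-intersections; the optimal bounds there, applied to the case $K_W^2=1$ with a single Wahl singularity, cut the possibilities down to a short list of continued fractions $[b_1,\dots,b_s]$, together with the value of $K_X^2$ in each case. Because $X$ is not rational by hypothesis and $q(X)=0$, $\chi(\O_X)=1$, the Enriques–Kodaira classification forces $\kappa(X)\in\{0,1\}$: it cannot be $2$ because a minimal surface of general type with those invariants would have $K^2\ge 1$ and the presence of the negative chain plus the ampleness/genericity constraints rule it out (more precisely, running the $\Q$-Gorenstein MMP on $W$ as in \cite{HTU17,U16} must terminate, and if $X$ were of general type the degeneration would be of a general type surface to itself, contradicting that we are degenerating the $1$-dimensional-$K^2$ Godeaux family non-trivially into this stratum). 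When $\kappa(X)=0$, the only option with $\chi=1$, $q=0$ is that the minimal model of $X$ is an Enriques surface; when $\kappa(X)=1$, $X$ is (a blow-up of) a properly elliptic surface with $\chi=1$, $q=0$, i.e. a $D_{m,n}$ surface, and the torsion/fundamental-group constraint coming from the $\Z/2$-Godeaux structure (the degeneration must carry a $\Z/2$ in its local fundamental group complement, forcing $\gcd$ conditions) pins down $(m,n)$ to $(2,3),(2,4),(2,6)$.

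The heart of the argument — and the main obstacle — is then the case-by-case bookkeeping: for each admissible continued fraction $[b_1,\dots,b_s]$ from \cite{RU19} and each candidate minimal model ($D_{2,3}$, $D_{2,4}$, $D_{2,6}$, or Enriques), one must determine exactly how the chain of curves can be configured on $X$ and how many times $X$ is blown up from its minimal model. This uses: (i) the adjunction/intersection constraints forcing the $(-1)$-curves produced by the blow-downs $X\to X_{\min}$ to meet the chain with the multiplicities stated; (ii) the fact that on $D_{2,n}$ the only negative curves available are components of the multiple fibers and (for small $n$) a handful of sections/multisections, and on an Enriques surface the $(-2)$-curves and the images of $(-1)$-curves under blow-ups are likewise constrained; (iii) Noether's formula and $K_X^2$ accounting to fix the number of blow-ups. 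This is where Figures \ref{f1} and \ref{f2} come from. Finally, the existence clause ("all cases do exist, except possibly A2 and A5") is proved separately: in Sections \ref{Enriques}, \ref{Realizations} the Enriques cases B1–B7 and the $D_{2,4}$ cases A3, A4 are realized by explicit double-plane constructions and by the explicit MMP of \cite{HTU17,U16}, and in Section \ref{CoughlanDnm} the cases A3 and A1 (the $D_{2,4}$ and $D_{2,3}$ surfaces with a $(-4)$-curve) are found inside the equations of Coughlan's family; cases A2 and A5 are left open because no such construction nor a deformation-theoretic argument is available. I expect the delicate point to be ruling out spurious continued fractions and spurious configurations — i.e. showing the list is exhaustive and not merely that these cases can occur — which is exactly what the optimal bounds of \cite{RU19} are designed to make tractable.
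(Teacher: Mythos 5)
Your outline follows the same route as the paper: reduce to $\kappa(X)\in\{0,1\}$, identify the minimal model as an Enriques surface or a $D_{n,m}$ via the finiteness of $\pi_1$ and Dolgachev's classification, restrict the admissible chains $[b_1,\dots,b_s]$ using the optimal bounds of \cite{RU19}, and then do configuration bookkeeping. But as written it is a plan rather than a proof, and the parts you defer are exactly where the content lies. Concretely: (a) you never produce the actual list of admissible chains --- the paper takes $[4]$, $[5,2]$, $[6,2,2]$, $[2,5,3]$ (and, for Enriques, chains of length up to $5$, the length-$5$ case being imported wholesale from \cite[Theorem 3.1]{RU19}) from \cite[Corollaries 2.12, 2.13 and Theorem 2.15]{RU19}, and without that list the case analysis cannot even begin; (b) the single most delicate elimination, ruling out a $D_{2,3}$ minimal model for the chain $[5,2]$, does not follow from the gcd/canonical-class bookkeeping you describe --- the canonical class formula happily allows $(n,m)=(2,3)$ there, and the paper kills it with \cite[Corollary 1.2.4]{H16}, which gives a surjection from $\Z/3$ (the index of $[5,2]$) onto $\pi_1$ of the smoothing $\Z/2$, a contradiction; your proposal contains no substitute for this step, and it is precisely the difference between case A1 and a spurious extra case; (c) the ``long diagram of type I or II'' constraints from \cite{RU19} that eliminate $[6,2,2]$ in the $\kappa=1$ branch and $[7,2,2,2]$, $[2,2,5,4]$ in the Enriques branch, and that force the stated $(-1)$-curve configurations in B4--B7, are not reproducible from adjunction and blow-up counting alone.

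A smaller but real issue: your exclusion of $\kappa(X)=2$ is circular. Arguing that ``the degeneration would be of a general type surface to itself, contradicting that we are degenerating non-trivially'' assumes what must be proved; nothing a priori prevents a KSBA-stable $W$ with one Wahl singularity from having a minimal resolution of general type. The paper sidesteps this entirely by citing \cite[Proposition 2.2]{RU19}, which for $K_W^2=1$ and $W$ non-rational directly identifies $X$ as a blow-up of an elliptic surface with $q=0$ or of an Enriques surface. If you want a self-contained argument you would need the numerical one: the chain of length $s$ gives $K_X^2=K_W^2-s\le 0$, and one must then rule out non-minimal general type $X$ using the structure of the nef part of $K_X$, which is again essentially the content of the cited proposition. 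The existence clause of your write-up correctly matches the paper's division of labor (Sections \ref{Enriques}, \ref{Realizations}, \ref{CoughlanDnm}), with one slip: the explicit equations in Section \ref{CoughlanDnm} realize A1 and A3, not ``A3 and A1'' as separate items from A4; A4 is obtained only via the $M$-resolution and MMP argument.
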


\begin{proof}

First, by \cite[Proposition 2.2]{RU19} and our hypothesis ($K_W^2=1$ and $W$ non-rational), we have that $X$ is the blow-up of either an elliptic surface with $q=0$ or an Enriques surface. Note that $p_g(W)=0$ as well, because $W$ is a $\Q$-Gorenstein degeneration of a $\Z/2$-Godeaux surface $Z$.

\vspace{0.2cm}
{\bf Say that $\kappa(X)=1$.} Let $\pi \colon X \to S$ be the blow-down to a minimal surface $S$. Hence, in our situation, $S$ has an elliptic fibration $S \to \P^1$.

As in the proof of \cite[Proposition 6.1]{RTU17}, we have that $$\pi_1^{\text{\'et}}(Z) \to \pi_1^{\text{\'et}}(W)$$ is surjective, $\pi_1^{\text{\'et}}(W) \simeq \pi_1^{\text{\'et}}(X)$, and $\pi_1(X)$ is residually finite, and so $\pi_1(X)$ could be trivial or $\Z/2$. As the fundamental group is finite and $\kappa(S)=1$, by \cite[Corollary p.146]{Dolg88} we have that the elliptic surface $S$ must have two multiple fibres and so it is a $D_{n,m}$. Since $\pi_1(X)$ could be trivial or $\Z/2$, we have that gcd$(n,m)=1$ or $2$ respectively. The canonical class formula gives $K_S \sim -F + (n-1)F_n + (m-1)F_m$ where $F$ is a general fibre, and the divisors $F_n$, $F_m$ are reduced fibres so that $F \sim nF_n \sim mF_m$. 

On the other hand, by \cite[Theorem 2.15]{RU19} we have that the exceptional divisors in $X$ could be $[4]$, $[5,2]$, $[6,2,2]$, $[2,5,3]$. We now check case by case:

\vspace{0.2cm}
$[4]$: Then $X=S$. Let $C$ be the $(-4)$-curve. Then $K_S \cdot C=2$ gives restrictions on gcd$(n,m)$. The only possible pairs $(n,m)$ are $(2,3)$, $(2,4)$ and $(2,6)$.

\vspace{0.2cm}
$[5,2]$: Then $X \to S$ is the blow-up at one point. The $(-1)$-curve cannot touch the $(-2)$-curve, and so it must touch the $(-5)$-curve with multiplicity $2$. (It cannot just intersect it at one point since $K_W$ is ample, and for multiplicity $>2$ it would be trivial or negative for $K_S$.) So in $S$ the $(-5)$-curve becomes a curve $\Gamma$ such that $\Gamma \cdot K_S=1$. Then we use the canonical class formula and gcd$(n,m)=1$ or $2$ to get that $(n,m)=(2,4)$ or $(2,3)$ only. In the case of $(2,3)$ we have a simply connected surface. Then by \cite[Corollary 1.2.4]{H16} and since the index of $[5,2]$ is $3$, we obtain an exact sequence $\Z/3 \to \Z/2 \to 0$, which is a contradiction. Therefore the only possible case is $(2,4)$.

\vspace{0.2cm}
$[6,2,2]$: Then $X \to S$ is the composition of two blow-ups. According to \cite[Corollaries 2.12, 2.13 and Theorem 2.15]{RU19} this case can only happen with a $(-1)$-curve which forms a long diagram of type I or II (see \cite{RU19} for the definition). But then there is a $(-1)$-curve intersecting one of the $(-2)$-curves transversally at only one point, and this is a contradiction with the number of blow-ups from $S$.

\vspace{0.2cm}
$[2,5,3]$: Similarly, according to \cite[Corollaries 2.12, 2.13 and Theorem 2.15]{RU19} this can only happen as $X \to S$ blow-up twice, where there is a $(-1)$-curve in $X$ intersecting once the $(-2)$-curve and once the $(-5)$-curve, disjoint from the $(-3)$-curve. But then the $(-3)$-curve in $S=D_{n,m}$ intersects a nodal rational curve at one point. Then, by using adjunction, we obtain that the only possible pair is $(n,m)=(2,4)$ where the multiplicity $4$ fiber is the $I_1$ image under $X \to S$ of the $(-5)$-curve.\footnote{This was not considered in \cite[Theorem 3.2]{RU19}, but it is in the arXiv corrected version.}


\vspace{0.2cm}
{\bf Say now that $\kappa(X)=0$.} Let $\pi \colon X \to S$ be the blow-down to an Enriques surface $S$. By \cite[Corollaries 2.12, 2.13 and Theorem 2.15]{RU19} we have that the exceptional divisor in $X$ could have at most $5$ $\P^1$'s. The case of $5$ $\P^1$'s was classified in \cite[Theorem 3.1]{RU19}, and it gives precisely the cases (6) and (7) in the list above. Thus we now check case by case when we have at most $4$ $\P^1$'s:

\vspace{0.2cm}
$[4]$: This case is impossible since $X=S$ and $K_S \equiv 0$.  

\vspace{0.2cm}
$[5,2]$: We have that $X \to S$ is the blow-up at one point. The $(-1)$-curve cannot touch the $(-2)$-curve. The only possibility then is that it touches the $(-5)$-curve with multiplicity $3$.

\vspace{0.2cm}
$[6,2,2]$: Here $X \to S$ contracts two $(-1)$-curves. One checks that a $(-1)$-curve must be disjoint from the $(-2)$-curves. Since $K_S \equiv 0$, the only possible situation is to have two disjoint $(-1)$-curves intersecting the $(-6)$-curve at two points each.

\vspace{0.2cm}
$[2,5,3]$: In this case $X \to S$ is blow-up twice. The $(-1)$-curve cannot touch the $(-2)$-curve. Since we have a $(-3)$-curve in $X$, we need a $(-1)$-curve touching it once. Since $K_W$ is ample, it must intersect the $(-5)$-curve. It can only be at one point, and there must exist another $(-1)$-curve intersecting the $(-5)$-curve with multiplicity $2$. 

\vspace{0.2cm}
For the next cases, it can only be the situation of a long diagram of type I or II. The map $X \to S$ is a blow-up three times.

\vspace{0.2cm}
$[7,2,2,2]$: It is not possible, since we would have $4$ blow-downs.

\vspace{0.2cm}
$[2,6,2,3]$: There is a $(-1)$-curve intersecting the first $(-2)$-curve and the $(-6)$-curve at one point each. After contracting it and the new $(-1)$-curve from the $(-2)$-curve, we obtain a nodal rational curve with self-intersection $-1$. We still have a $(-3)$-curve, and so a new $(-1)$-curve is needed intersecting it at one point, and also the nodal $(-1)$-curve.

\vspace{0.2cm}
$[2,2,5,4]$: Long diagrams of type I or II here are not possible, just using that $K_S \equiv 0$. 

\vspace{0.2cm}
$[3,5,3,2]$: Here the long diagram gives a $(-1)$-curve intersecting the $(-2)$-curve and the $(-5)$-curve at one point each. After that, one can check that there must be a $(-1)$-curve intersecting the first $(-3)$-curve with the $(-5)$-curve.

The existence of such surfaces will be proved in the next sub-section.
\end{proof}

\begin{remark}
For simply-connected Godeaux surfaces the analogue non-rational list contains only two possible surfaces: either a $D_{2,3}$ with Exc$(\phi)=[4]$, or the blow-up at one point of a $D_{2,3}$ with Exc$(\phi)=[5,2]$ and a $(-1)$-curve intersecting the $(-5)$-curve with multiplicity $2$. Both are realizable (see e.g. \cite[Table in p.666]{SU16}), and give divisors in the KSBA compactification of the moduli space.
\end{remark}

\begin{remark}
It is not clear how to optimally bound Wahl singularities in rational surfaces.
As far as we know, there is no written example of a rational degeneration $W$ of $\Z/2$-Godeaux surfaces in the literature.
(We believe they exist in Coughlan's family of $\Z/2$-Godeaux surfaces, but the computations involved in order to describe them are terribly slow.)
However for simply-connected Godeaux surfaces there are many examples (see e.g. \cite[Table in p.666]{SU16}, where there are $30$ examples). We note that in this rational case the index of the Wahl singularity for a $\Z/2$-Godeaux degeneration must be even because of \cite[Corollary 1.2.4]{H16}. 
\end{remark}

\subsection{Enriques double planes}\label{Enriques}

The following construction of an Enriques surface as the smooth minimal model of a double plane is well-known, see e.g. \cite[\S IV.9]{CoDo89}.

Consider lines $L_1,L_2\subset\mathbb P^2$ meeting at a point $p_0,$ and take points $p_1\in L_1,$ $p_2\in L_2,$ $p_1,p_2\not= p_0.$
Let $B$ be a sextic plane curve with a node at $p_0,$ a tacnode at $p_i$ with branches tangent to the line $L_i,$ for $i=1,2,$
and at most other negligible singularities.
Let $\pi:X\rightarrow\mathbb P^2$ be the blow-up that resolves the singularities of the curve
$B+L_1+L_2$. For $i=1,2,$ let $\widehat L_i$ be the strict transform of $L_i$ and $E_0,E_i,E_i'$ be the exceptional curves such that
the total transform of $L_i$ is $E_0+\widehat L_i+2E_i'+E_i$ (we have $E_i^2=-2,$ $E_i'^2=-1$).
Let $S'\rightarrow X$ be the double cover with branch curve
$$\overline B:=\widehat B +\widehat L_1+\widehat L_2+E_1+E_2,$$ where $\widehat B$ is the strict
transform of $B.$ Let $S$ be the minimal model of $S',$ which is obtained by contracting the four $(-1)$-curves that are the preimage of
$\widehat L_1+\widehat L_2+E_1+E_2.$ The surface $S$ is an Enriques surface.

\begin{figure}[h]
  \includegraphics[width=\linewidth]{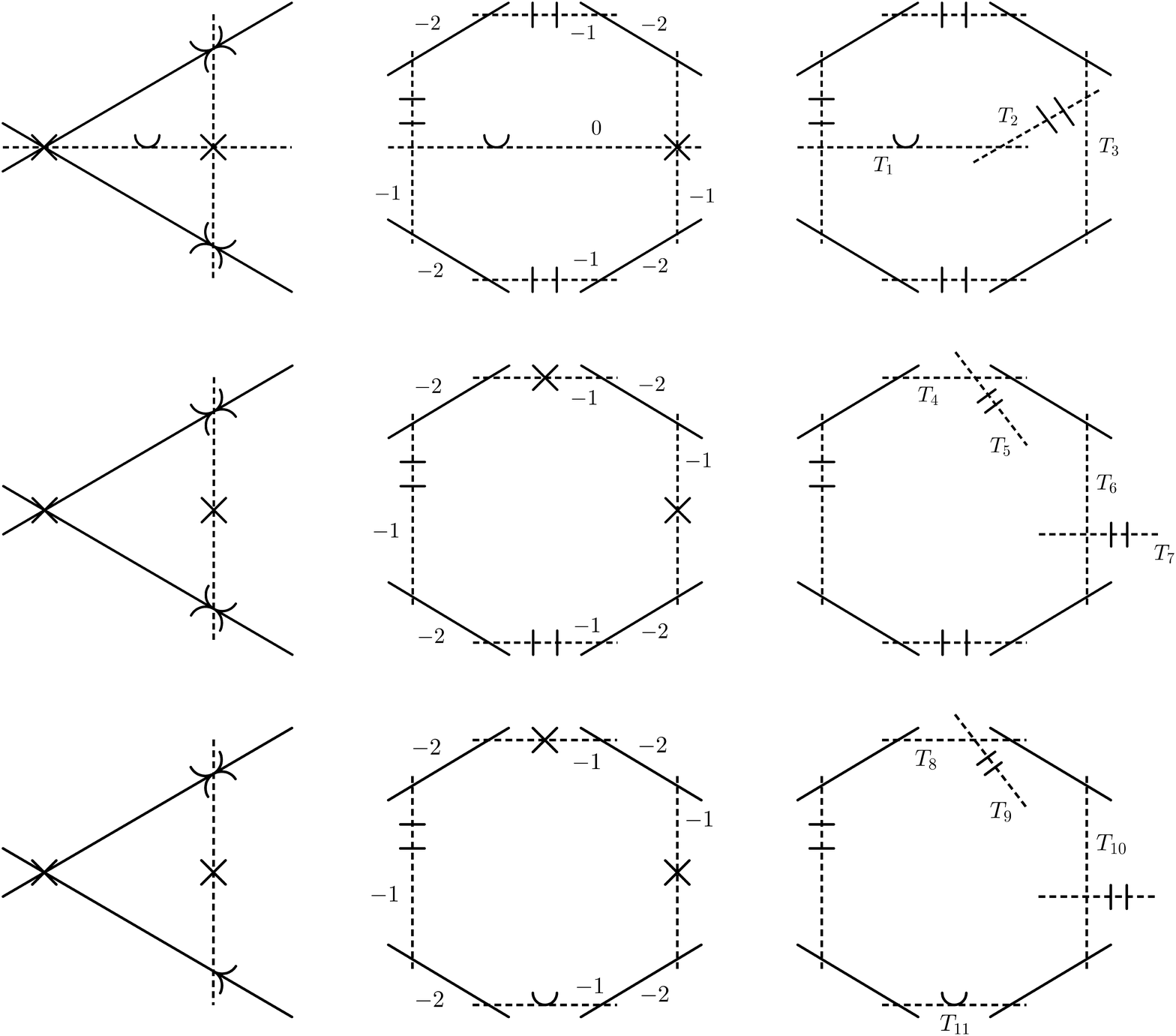}
  \caption{Sextic curves and their resolution.} \label{sextics}
\end{figure}

Consider the three Enriques surfaces corresponding to branch curves $\overline B\subset X$ as in Figure \ref{sextics},
where from left to right we blow-up $\mathbb P^2$ until we resolve the singularities of the curve (dotted curves are not in the branch curve).
Note that the existence of such curves is not surprising, because we are imposing at most 20 conditions to a linear system of dimension 27. 
We give explicit equations in an arXiv ancillary file. 

These smooth Enriques surfaces have a configuration of rational curves as in Figure \ref{f3},
with the correspondences
\begin{itemize}
\item[(i)] $T_1\longleftrightarrow C+D,\ \ T_2\longleftrightarrow B,\ \ T_3\longleftrightarrow A,$
\item[(ii)] $T_4\longleftrightarrow B,\ \ T_5\longleftrightarrow A,\ \ T_6\longleftrightarrow D,\ \ T_7\longleftrightarrow C,$
\item[(iii)] $T_8\longleftrightarrow B,\ \ T_9\longleftrightarrow A,\ \ T_{10}\longleftrightarrow C,\ \ T_{11}\longleftrightarrow D.$
\end{itemize}

\subsection{Realizations of degenerations}\label{Realizations}

In this section we discuss the realization of the possibilities in Theorem \ref{possible}. Below we follow the numeration in that theorem. We do not know about existence for the possibilities \textbf{(A2)} and \textbf{(A5)}. The case \textbf{(A2)} is in the classification of degenerations of Godeaux surfaces with one $\frac{1}{4}(1,1)$ singularity in \cite{K14}, but there was no construction (see \cite[Remark 2.11]{K14}). 

\vspace{0.2cm}
\textbf{(A1)} 

It can be realized using our equations of Coughlan's family of surfaces. For the details see Section \ref{D23}.

These degenerations are particularly interesting since we have the same $\Q$-Gorenstein degenerations via simply connected Godeaux surfaces (see e.g. \cite[\S 5]{U16}). 

\vspace{0.2cm}
\textbf{(A3)} 

This possibility can be realized using \cite[Example 1]{KLP12}. The singular surface $W$ constructed for that example has $4$ singularities: two $[2,3,2,4]$ and two $[4]$. It also has no local-to-global obstructions to deform, and it is proved that a $\Q$-Gorenstein smoothing is a Godeaux surface with $\pi_1 \simeq \Z/2$. We realize a surface in (A3) as the minimal resolution of a $\Q$-Gorenstein smoothing of all singularities in $W$ except one $[4]$. To show that it is indeed a $D_{2,4}$ with a $(-4)$-curve inside, we use the explicit MMP in \cite{HTU17} (see also \cite{U16}). We note that it is not a trivial computation since we have $3$ possibilities for $D_{n,m}$ here. At the end, it is a $D_{2,4}$ because it comes from a $\Q$-Gorenstein smoothing ``over" a multiplicity $2$ fiber for the singularity $[4]$. This example gives a divisor in the KSBA moduli space, whose general member is a $D_{2,4}$ with the $(-4)$-curve contracted.

Also, it can be realized explicitly using our equations of Coughlan's family of surfaces. For the details see Section \ref{D24}.

\vspace{0.2cm}
\textbf{(A4)} 

Take again the singular surface $W$ in \cite[Example 1]{KLP12} but now we $\Q$-Gorenstein smooth all singularities in $W$ except one $[2,3,2,4]$. By the explicit MMP in \cite{HTU17} we obtain that the minimal resolution of $[2,3,2,4]$ is the blow-up of a $D_{2,4}$ at one point, where the $(-1)$-curve connects the $(-3)$-curve with the $(-4)$-curve. We recall that the $M$-resolution of $[2,3,2,4]$ is the partial resolution $[2,5]-1-[2,5]-1-[2,5]$ which also has no-local-to-global obstructions. Then, we just keep one $[2,5]$ in a $\Q$-Gorenstein, smoothing all the rest to obtain the surface we are looking for. Its minimal resolution corresponds to (A4). As in (A3), this example gives a divisor in the KSBA moduli space.

\begin{figure}[htbp]
\centering
\includegraphics[width=11cm]{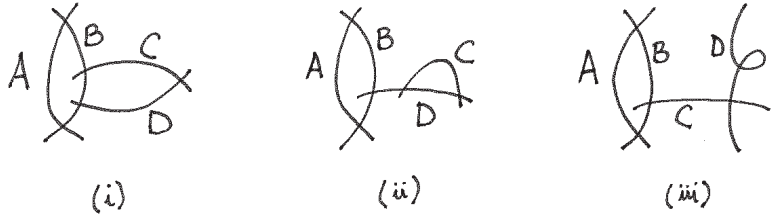}
\caption{Key configurations of Enrique type} 
\label{f3}
\end{figure}

\vspace{0.2cm}
\textbf{(B1)} and \textbf{(B4)} 

From Section \ref{Enriques}, there exists an Enriques surface $S$ which has the configuration of smooth rational curves $A,B,C,D$ shown in Figure \ref{f3} part (i).
Let $\pi \colon X \to S$ be the blow-up of $S$ five times, so that the configuration $A,B,C,D$ is transformed into the configuration in Figure \ref{f4}, where the $E_i$ are the ordered exceptional curves. Hence $E_1^2=E_3^2=-2$, and $E_2^2=E_4^2=E_5^2=-1$.

\begin{figure}[htbp]
\centering
\includegraphics[width=5cm]{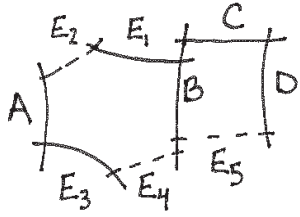}
\caption{The surface $X$ for cases (B1) and (B4)} 
\label{f4}
\end{figure}

We get Wahl chains $[E_1,B,C,D]=[2,6,2,3]$ and $[A,E_3]=[5,2]$. Let $\phi \colon X \to W$ be the contraction of both of them. The normal projective surface $W$ has two Wahl singularities $\frac{1}{7^2}(1,20)$ and $\frac{1}{3^2}(1,2)$. The canonical class $K_W$ is ample since $\phi^*(K_W)$ can be written $\Q$-effectively using only curves in Figure \ref{f4}, and so we check ampleness through the intersections $\phi^*(K_W).E_i >0$ for $i=2,4,5$.
\footnote{We may find ADE configurations disjoint from $A,B,C,D$ in $S$ which would intersect $K_W$ trivially. If that happens, one can always smooth them up so that $K$ for the resulting surface is ample.}

We now show that there are no local-to-global obstructions to deform $W$. For that it is enough to show that $H^2(S, T_S(-\log(A+B+C+D)))=0$, following the well-known strategy from \cite{LP07}. We will use the following lemma (see \cite[Section 3.1]{RU19}).

\begin{lemma}
Let $f \colon S' \to S$ be the \'etale double cover induced by the relation $2K_S \sim 0$. Let $\Gamma_1,\ldots, \Gamma_r$ be a simple normal crossings divisor in $S$. Then $$f_*\big(T_{\bar S}\big(-\log\big(\sum_i f^* \Gamma_i \big)\big)\big)=T_S\big(-\log\big(\sum_i \Gamma_i \big)\big)\oplus T_S\big(-\log\big(\sum_i \Gamma_i \big)\big)\big(-K_S \big),$$ and so $$H^0(S',\Omega_{S'}^1 \big(\log\big(\sum_i f^* \Gamma_i \big)\big)=H^2(S,T_S\big(-\log\big(\sum_i \Gamma_i \big)\big) \oplus H^0(S,\Omega_S^1\big(\log \sum_i \Gamma_i \big) ).$$ In particular, if the curves $\{f^*(\Gamma_i)\}_{i=1}^r$ are numerically independent, then $$H^0(S',\Omega_{S'}^1 \big(\log\big(\sum_i f^* \Gamma_i \big)\big)=0,$$ and so $H^2(S,T_S\big(-\log\big(\sum_i \Gamma_i \big)\big)=0$.
\label{trick}
\end{lemma}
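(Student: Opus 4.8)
The plan is to transport the log sheaves on the K3 cover $S'$ down to $S$ along the finite étale map $f$, and then extract the cohomology via Serre duality on $S$ together with $2K_S\sim 0$. First I would record the basic structure. The double cover $f\colon S'\to S$ is the one attached to the $2$-torsion line bundle $\mathcal{O}_S(K_S)$, so $f_*\mathcal{O}_{S'}=\mathcal{O}_S\oplus\mathcal{O}_S(-K_S)$ and $f^*\mathcal{O}_S(K_S)\cong\mathcal{O}_{S'}$; in particular $S'$ is a K3 surface. Since $f$ is étale, $D':=\sum_i f^*\Gamma_i$ is the reduced preimage of the simple normal crossings divisor $\Gamma:=\sum_i\Gamma_i$, hence itself simple normal crossings, and $T_{S'}(-\log D')\cong f^*T_S(-\log\Gamma)$, $\Omega^1_{S'}(\log D')\cong f^*\Omega^1_S(\log\Gamma)$ (a local generator $dz/z$ of the log differentials pulls back to a local generator, as $f^*\Gamma_i$ is reduced). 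The projection formula then gives the first displayed isomorphism,
\[
f_*\big(T_{S'}(-\log D')\big)\;\cong\;T_S(-\log\Gamma)\otimes f_*\mathcal{O}_{S'}\;\cong\;T_S(-\log\Gamma)\oplus T_S(-\log\Gamma)(-K_S),
\]
and likewise $f_*\Omega^1_{S'}(\log D')\cong\Omega^1_S(\log\Gamma)\oplus\Omega^1_S(\log\Gamma)(-K_S)$.

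Next I would compute cohomology. As $f$ is affine, $H^0(S',\Omega^1_{S'}(\log D'))=H^0(S,f_*\Omega^1_{S'}(\log D'))=H^0(S,\Omega^1_S(\log\Gamma))\oplus H^0(S,\Omega^1_S(\log\Gamma)(-K_S))$. For the twisted summand I would invoke Serre duality on $S$: using $2K_S\sim 0$ one has $\big(\Omega^1_S(\log\Gamma)(-K_S)\big)^\vee\otimes\omega_S\cong T_S(-\log\Gamma)(2K_S)\cong T_S(-\log\Gamma)$, hence $H^0(S,\Omega^1_S(\log\Gamma)(-K_S))\cong H^2(S,T_S(-\log\Gamma))^\vee$. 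This yields the second displayed formula (with the $H^2$-term read up to Serre duality, which is immaterial for what follows).

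For the final assertion I would use the residue (conormal) sequence on the K3 surface $S'$,
\[
0\longrightarrow\Omega^1_{S'}\longrightarrow\Omega^1_{S'}(\log D')\xrightarrow{\ \mathrm{res}\ }\bigoplus_{j=1}^N\mathcal{O}_{D'_j}\longrightarrow 0,
\]
where $D'_1,\dots,D'_N$ are the irreducible components of $D'$. Since $S'$ is K3, $H^0(S',\Omega^1_{S'})=0$, so $H^0(S',\Omega^1_{S'}(\log D'))$ is the kernel of the connecting map $\delta\colon\mathbb{C}^N\to H^1(S',\Omega^1_{S'})$, $e_j\mapsto c_1(\mathcal{O}_{S'}(D'_j))$. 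Thus $\delta$ is injective precisely when the classes $[D'_j]$ are numerically independent, which is exactly the hypothesis; therefore $H^0(S',\Omega^1_{S'}(\log D'))=0$, and feeding this back into the formula of the previous paragraph forces $H^2(S,T_S(-\log\Gamma))=0$.

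The work is essentially bookkeeping, and the delicate points are: (i) keeping the twists straight, i.e. using $2K_S\sim 0$ at each application of Serre duality; and (ii) matching the numerical independence of the $f^*\Gamma_i$ with independence of the classes of the possibly more numerous irreducible components of $D'$ that actually enter the residue sequence — when some $f^*\Gamma_i$ splits under $f$, one uses the covering involution on $S'$ to control the extra components. One should also cite the standard identification of the connecting homomorphism of the residue sequence with the first Chern classes of the divisor components.
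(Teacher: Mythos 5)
The paper does not actually prove this lemma --- it is quoted from \cite[Section 3.1]{RU19} --- but your argument is the standard one behind that reference and is essentially correct: $f_*\O_{S'}=\O_S\oplus\O_S(-K_S)$ plus the projection formula gives both splittings, Serre duality together with $2K_S\sim 0$ converts the twisted summand into $H^2(S,T_S(-\log\sum_i\Gamma_i))^\vee$, and the residue sequence on the K3 cover (with $H^0(S',\Omega^1_{S'})=0$ and the connecting map given by first Chern classes) yields the vanishing. All of these steps check out, including the passage from numerical independence over $\bQ$ to injectivity of $\delta$ over $\C$, since $\operatorname{Pic}(S')\hookrightarrow H^2(S',\Z)$ is torsion-free on a K3.

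The one genuine gap is exactly the point you flag at the end, and your proposed remedy does not close it. The residue sequence is indexed by the irreducible components $D'_j$ of $\sum_i f^*\Gamma_i$, so injectivity of $\delta$ needs the classes of those components to be independent. If some $f^*\Gamma_i$ splits as $\Gamma_i^++\Gamma_i^-$, independence of the divisors $f^*\Gamma_i$ does not give this, and the covering involution cannot rescue it: $\tau$ merely swaps $\Gamma_i^+$ and $\Gamma_i^-$, which is perfectly compatible with $\Gamma_i^+\equiv\Gamma_i^-$ (two members of a pencil, say), in which case $e_i^+-e_i^-\in\ker\delta$ and $H^0(S',\Omega^1_{S'}(\log\sum_i f^*\Gamma_i))\neq 0$. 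The hypothesis must be read --- as the paper does when it applies the lemma, checking that $f^*(A+B+C+D)$ is ``a divisor \emph{supported} in numerically independent curves'' --- as numerical independence of the irreducible components of $\sum_i f^*\Gamma_i$; with that reading your residue argument is complete. (If one only wants the consequence $H^2(S,T_S(-\log\sum_i\Gamma_i))=0$, it suffices to check injectivity of $\delta$ on the $\tau$-anti-invariant subspace, i.e.\ independence of the differences $\Gamma_i^+-\Gamma_i^-$, but that too is an extra hypothesis, not a consequence of independence of the sums.)
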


By Lemma \ref{trick}, we only need to check that $f^*(A+B+C+D)$ is a divisor supported in numerically independent curves. For that we compute the corresponding intersection matrix and check that the determinant is not zero. Then we have no local-to-global obstructions, and we consider a $\Q$-Gorenstein smoothing of $W$. Since we have $E_2 \simeq \P^1$ connecting the ends of the Wahl chains and the indexes of the singularities are coprime, we obtain that the general fiber has fundamental group isomorphic to $\Z/2$. One also has $K_W^2=1$, and $p_g=q=0$, and so we have $\Z/2$-Godeaux surfaces as general fibers.

To obtain examples of types (B1) and (B4), we consider the minimal resolution of the partial $\Q$-Gorenstein smoothing of $[2,6,2,3]$ or $[5,2]$, respectively. To check that they are indeed blow-ups of Enriques surfaces, we run the explicit MMP in \cite{HTU17}. For each of the singularities we obtain a divisor in the KSBA compactification of the moduli space of $\Z/2$-Godeaux surfaces. Both of these examples are new in the literature.

\vspace{0.2cm}
\textbf{(B2) and (B5)} 

From Section \ref{Enriques}, there exists an Enriques surface $S$ which has the configuration of smooth rational curves $A,B,C,D$ shown in Figure \ref{f3} part (ii).

\begin{figure}[htbp]
\centering
\includegraphics[width=9cm]{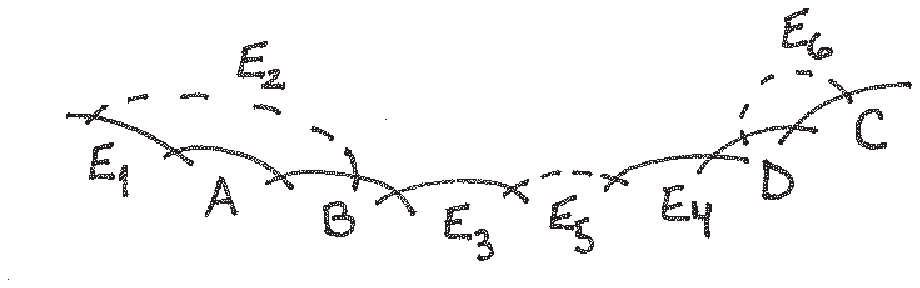}
\caption{The surface $X$ for cases (B2) and (B5)} 
\label{f5}
\end{figure}

Let $\pi \colon X \to S$ be the blow-up of $S$ six times, so that the configuration $A,B,C,D$ is transformed into the configuration in Figure \ref{f5}, where the $E_i$ are the ordered exceptional curves. Hence $E_3^2=-3$, $E_1^2=E_4^2=-2$, and $E_2^2=E_5^2=E_6^2=-1$. We get Wahl chains $[E_1,A,B,E_3]=[2,3,5,3]$ and $[E_4,D,C]=[2,5,3]$. Let $\phi \colon X \to W$ be the contraction of both of them. The normal projective surface $W$ has two Wahl singularities $\frac{1}{8^2}(1,23)$ and $\frac{1}{5^2}(1,9)$. The canonical class $K_W$ is ample since $\phi^*(K_W)$ can be written $\Q$-effectively using only curves in Figure \ref{f5}, and so we check ampleness through the intersections $\phi^*(K_W).E_i >0$ for $i=2,5,6$. \footnote{Just as in the previous example, zero curves for $K_W$ do not matter.} 

We now show that there are no local-to-global obstructions to deform $W$. As done above, for that it is enough to show that $H^2(S, T_S(-\log(A+B+C+D)))=0$. We use again Lemma \ref{trick}, and so we only need to check that $f^*(A+B+C+D)$ is a divisor supported in numerically independent curves. For that we compute the corresponding intersection matrix and check that the determinant is not zero. Then we have no local-to-global obstructions, and we consider a $\Q$-Gorenstein smoothing of $W$. Since we have $E_5 \simeq \P^1$ connecting the ends of the Wahl chains and the indices of the singularities are coprime, we obtain that the general fiber has fundamental group isomorphic to $\Z/2$. One also has $K_W^2=1$, and $p_g=q=0$, and so we have $\Z/2$-Godeaux surfaces as general fibers.

To obtain examples of types (B2) and (B5), we consider the minimal resolution of the partial $\Q$-Gorenstein smoothing of $[2,3,5,3]$ or $[2,5,3]$, respectively. To check that they are indeed blow-ups of Enriques surfaces we run the explicit MMP in \cite{HTU17}. For each of the singularities we obtain a divisor in the KSBA compactification of the moduli space of $\Z/2$-Godeaux surfaces. Both of these examples are new in the literature.

\vspace{0.2cm}
\textbf{(B3) and (B7)} 

As in the previous examples, we first construct an Enriques surface $S$ which has the configuration of smooth rational curves $A,B,C,D$ shown in Figure \ref{f3} part (iii), see Section \ref{Enriques}.

\begin{figure}[htbp]
\centering
\includegraphics[width=7cm]{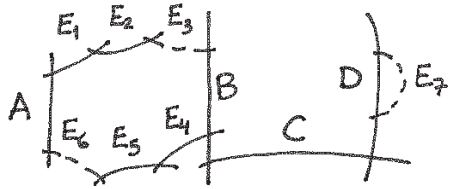}
\caption{The surface $X$ for cases (B3) and (B7)} 
\label{f6}
\end{figure}

Let $\pi \colon X \to S$ be the blow-up of $S$ seven times, so that the configuration $A,B,C,D$ is transformed into the configuration in Figure \ref{f6}, where the $E_i$ are the ordered exceptional curves. Hence $E_1^2=E_2^2=E_4^2=E_5^2=-2$, and $E_3^2=E_6^2=E_7^2=-1$. We get Wahl chains $[E_2,E_1,A]=[2,2,6]$ and $[E_5,E_4,B,C,D]=[2,2,6,2,4]$. Let $\phi \colon X \to W$ be the contraction of both of them. The normal projective surface $W$ has two Wahl singularities $\frac{1}{4^2}(1,3)$ and $\frac{1}{10^2}(1,29)$. The canonical class $K_W$ is ample since $\phi^*(K_W)$ can be written $\Q$-effectively using only curves in Figure \ref{f5}, and so we check ampleness through the intersections $\phi^*(K_W).E_i >0$ for $i=3,6,7$. All the rest of the arguments are analogues to the ones given in the last two examples, except for the computation of obstruction. Lemma \ref{trick} is used in a different way. One can prove that on the K3 surface $S'$ we have $H^0(S',\Omega_{S'}^1 \big(\log\big(\sum_i f^* \Gamma_i \big)\big)=1$, but at the same time 
$H^0(S,\Omega_{S}^1 \big(\log\big(\sum_i \Gamma_i \big)\big)=1$, and so $H^2(S,T_S\big(-\log\big(\sum_i \Gamma_i \big)\big)=0$. (It is the same argument as in case (B) of \cite[Section 3.1]{RU19}. 

For each of the singularities we obtain a divisor in the KSBA compactification of the moduli space of $\Z/2$-Godeaux surfaces. The one for $[6,2,2]$ is new in the literature, the other one is \cite[Section 3.1 (B)]{RU19}.

\vspace{0.2cm}
\textbf{(B6)} 

There is an example of this case in \cite[Section 3.1 (C)]{RU19}. It gives also a boundary divisor in the KSBA compactification of the moduli space of $\Z/2$-Godeaux surfaces. This case together with (B7) achieve the optimal upper bound for lengths of Wahl singularities in stable surfaces (see \cite[Theorem 3.1]{RU19}).

\section{Coughlan's family}\label{CoughlanFamily}

Stephen Coughlan \cite{CoughlanGodeaux} has given the construction of an irreducible family of simply connected surfaces $Y$ with invariants $p_g=1$, $q=0$, $K^2=2$ having a free action of $\Z/2$, thus producing a family of $\Z/2$-Godeaux surfaces $Y/(\Z/2)$. Here we go over his construction and implement it in order to get explicit equations for the family. This task is computationally demanding and some workarounds are needed in order to succeed.

In Section \ref{exthypk3} we give an overall resume of the method used in \cite{CoughlanGodeaux}.
In Section \ref{unprojIV} we follow the method described in \cite{ReidUnprojIV} to obtain explicit equations for the surfaces $Y$. The corresponding computations are implemented with Magma \cite{BCP}, version V2.25-2, and are available as arXiv ancillary files.

As a conclusion of this construction, we find out that one of the 8 parameters of Coughlan's family is redundant,
so his model depends on $7$ parameters, see Section \ref{count}.

We show in Section \ref{16dim} that the \'etale coverings of Coughlan's surfaces belong to the 16-dimensional component
$\mathscr M_E$ described in \cite[\S 5]{CataneseY}, thus their topological fundamental group is $\mathbb Z/2.$

\subsection{Extending hyperelliptic $K3$ surfaces}\label{exthypk3}

The description of a canonical ring for $Y$ is based on a diagram\\\\

$\begin{CD}
W'_{6,6} \subset \P(1,2^3,3^2)  @<{\rm proj}<< W \subset \P(1,2^4,3^4,4) @<<< Y\subset \P(1,2^3,3^4,4)\\
@AAA     @AAA                   @AAA\\
T'_{6,6} \subset \P(2^3,3^2)  @<{\rm proj}<< T \subset \P(2^4,3^4,4)@<<< D \subset \P(2^3,3^4,4)
\end{CD}$\\\\


\noindent where the 'proj' lines represent projections and the others are inclusions as hyperplane sections (of the correct degree),
and the varieties involved are defined below.

In \cite[pag. $72$]{extending} Reid has attempted the description of the canonical ring of $Y$ by extending the ring of its canonical curve $D$. Due to computational limitations this attempt was unsuccessful. Instead of trying to compute the extension of such a high-codimensional ideal by a variable of degree $1$, Coughlan uses simpler extensions followed by projections. This makes the varieties manageable as we will describe here.

The curve $D\subset Y$ is a hyperelliptic canonical curve section in $|K_Y|$ and its projective model is explicitly given in \cite[$\S 2$]{CoughlanGodeaux}.
It has a simple description as given by the $2\times 2$ minors of a $4\times 4$ matrix.

In \cite[$\S 3$]{CoughlanGodeaux} it is described a hyperelliptic $K3$ surface $T$ containing $D$, i.e. a $K3$ surface polarised by an ample line bundle $L$ such that the complete linear system $|L|$ contains the hyperelliptic curve $D$. In such case, $L$ determines a double cover $\pi\colon T\rightarrow Q\subset \P^3$, where $Q$ is a quadric surface, branched on a curve $C\in |-2K_Q|$. Identifying $Q$ with $\P^1\times\P^1$, the branch locus $C$ is of bidegree $(4,4)$. Assuming that it splits as $C_1+C_2$, of bidegree $(1,3)$ and $(3,1)$ respectively, the surface $T$ has $10$ nodes.

Blowing up one of the nodes in $C_1+C_2$, $P\in Q$, we get a double cover $\widetilde T\rightarrow \operatorname{Bl}_PQ$, with an exceptional divisor $E\cong \P^1$. Contracting the two $(-1)$--curves on $\operatorname{Bl}_PQ$ arising from the rulings of $Q$ we get a double cover $T'\rightarrow \P^2$ branched over two nodal cubics.

The procedure above can be seen as a projection from the point $P$.  

\begin{proposition}\cite[Prop. $3.3$]{CoughlanGodeaux}
The projection from the node $P\in T\subset \P(2^4,3^4,4)$ gives a complete intersection
$$T'_{6,6}\subset \P(2^3,3^2)=\Proj(\C[y_1,y_2,y_3,z_1,z_2])$$ of the type
$$
\begin{array}{r}
     z_1^2 -  y_1f^2 + (l_1f +l_2y_2+l_3g)y_2^2+l_4fgy_2 = 0\\
     z_2^2 -  y_3g^2 + (m_1f +m_2y_2+m_3g)y_2^2+m_4fgy_2 = 0,
\end{array}
$$
where $$f=y_1+\alpha y_3,\ \ \ g=\beta y_1+y_3$$
and $\alpha,\beta,l_i,m_j$ are constants.
The image of the exceptional curve $E$ is the line $\{y_2=0\}$.
\end{proposition}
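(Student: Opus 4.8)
The plan is to describe the ``projection from $P$'' explicitly as the elementary transformation of the quadric $Q$ centred at $P$ (blow up $P$, contract the two rulings of $Q$ through $P$), lifted through the double cover $\pi\colon T\to Q$, and to extract from it the homogeneous ideal of $T'$. First I would choose bihomogeneous coordinates $(u_0:u_1)\times(v_0:v_1)$ on $Q\cong\P^1\times\P^1$ with $P=([1:0],[1:0])$, one of the ten points of $C_1\cap C_2$. The weight-$2$ coordinates on $T\subset\P(2^4,3^4,4)$ are the $\pi^*(u_iv_j)$; the three of them vanishing at $P$ span the linear system $|\pi^*\O_Q(1,1)-P|$, which is exactly the system defining the elementary transformation $Q\dashrightarrow\P^2$, so the induced transformation of $T$ is the geometric construction recalled above. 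Writing $y_1,y_2,y_3$ for suitable coordinates on this $\P^2$, chosen so that $\{y_2=0\}$ is the image of the exceptional curve of $\mathrm{Bl}_PQ$ (which, being a line through the two contracted points $q_1,q_2$, can be put in this form), the image of $E$ is $\{y_2=0\}$; this is the last assertion.

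For the equations I would use that the branch curve of $T'\to\P^2$ is reducible: it is $\bar C_1\cup\bar C_2$, where $\bar C_i$ is the image of $C_i$, a plane cubic. Because the branch divisor splits, on a resolution of $T'$ the ramification over $\bar C_i$ is a half of $\pi^*\O_{\P^2}(3)$, so there is a weight-$3$ section $z_i$ cutting out this ramification, with $z_i^2=c_i'(y_1,y_2,y_3)$, where $c_i'$ is the cubic cutting out $\bar C_i$; together with $y_1,y_2,y_3$ these generate the coordinate ring of $T'$ (the ``global square root'' of $c_1'c_2'$ being $z_1z_2$). Thus $T'$ is the subscheme of $\P(2^3,3^2)$ defined by $z_1^2-c_1'(y)$ and $z_2^2-c_2'(y)$, and since $T'$ is irreducible of dimension $2$ these two weight-$6$ forms are a regular sequence; so $T'_{6,6}$ is a complete intersection. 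As a consistency check, adjunction gives $\omega_{T'}\cong\O_{T'}\big(-(2+2+2+3+3)+(6+6)\big)=\O_{T'}$, matching the fact that $T'$ is a nodal model of a $K3$ surface.

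It remains to bring $c_1'$ and $c_2'$ into the stated form, and this, together with the degree bookkeeping just used, is the main obstacle. Since $C_1$ has bidegree $(1,3)$ and passes simply through $P$, its image $\bar C_1$ is a cubic whose unique node is the image $q_1\in\{y_2=0\}$ of one ruling through $P$, while $\bar C_1$ avoids the image $q_2$ of the other ruling; symmetrically $\bar C_2$ has its node at $q_2$ and avoids $q_1$. Let $f$ (respectively $g$) be the linear form vanishing at $q_1$ (respectively $q_2$), normalised to $f=y_1+\alpha y_3$ and $g=\beta y_1+y_3$. Then $c_1'|_{\{y_2=0\}}$ is $y_1f^2$ up to scalar (the line $\{y_2=0\}$ meets $\bar C_1$ doubly at the node $q_1$ and once more, at $\{y_1=0\}$), which fixes the $y_2$-free part of $c_1'$; the coefficient of $y_2$ in $c_1'$ is a conic through $q_1$, which the transported equation of $C_1$ forces to be a multiple of $fg$; and the coefficient of $y_2^2$ is a general linear form $l_1f+l_2y_2+l_3g$. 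This yields $c_1'=y_1f^2-(l_1f+l_2y_2+l_3g)y_2^2-l_4fg\,y_2$, and the computation for $\bar C_2$ is the mirror image. The delicate points are tracking the degree shift of the elementary transformation — which sends a bidegree-$(1,3)$ curve to a plane cubic and, accordingly, the ``square root'' of the bidegree-$(1,3)$ form $c_1$ to the weight-$3$ element $z_1$ — and justifying that exactly $f^2$ and $fg$, and no more general quadrics, appear. In practice the safest route, and the one taken in the paper, is to import Coughlan's explicit equations for the canonical curve $D$ and the $K3$ surface $T$ from \cite[\S 2--3]{CoughlanGodeaux} and perform the elimination directly.
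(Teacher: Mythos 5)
The paper offers no proof of this statement: it is imported verbatim from \cite[Prop.~3.3]{CoughlanGodeaux}, where it is obtained by explicit computation with the equations of $T$. Your geometric reconstruction (elementary transformation of $Q$ centred at $P$, lifted through the double cover; the splitting of the branch sextic into two cubics giving the two half-sections $z_1,z_2$ with $z_i^2=c_i'$; the complete-intersection and adjunction checks) is sound and is a genuinely more conceptual route than the source's elimination. The identification of $\{y_2=0\}$ with the image of $E$ and the location/multiplicity of the nodes of $\bar C_1,\bar C_2$ at $q_1,q_2$ are also correct: since $C_1\cdot A_P=1$ and $C_1\cdot B_P=3$ with $P$ a simple point of $C_1$, the image $\bar C_1$ acquires a node at one contracted point and misses the other, and $c_1'|_{y_2=0}$ is indeed $(\text{linear})\cdot f^2$ with $f$ vanishing at the node.

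The genuine gap is your claim that the coefficient of $y_2$ in $c_1'$ is ``forced to be a multiple of $fg$'' by the transported equation of $C_1$. It is not. Writing $C_1\colon u_0A(v)+u_1B(v)=0$ with $A,B$ cubics and $A(P)=0$ (so that $P\in C_1$), the transported cubic in the natural coordinates $y_1=u_0v_1$, $y_2=u_1v_1$, $y_3=u_1v_0$ is linear in $y_1$, and its $y_2$-linear part is $f\cdot(\text{a \emph{general} linear form})$ --- the node at $q_1$ forces only the factor $f$, and nothing about $C_1$ alone forces the second factor to vanish at $q_2$, i.e.\ to be proportional to $g$. The stated normal form therefore requires an extra normalisation: after fixing $\{y_2=0\}$ and the residual intersection points (which pin down $y_1,y_3$ on that line), there remains exactly the two-parameter freedom $y_1\mapsto y_1+c_1y_2$, $y_3\mapsto y_3+c_3y_2$, which alters the two $y_2$-linear coefficients by $f\cdot(\text{linear})$ and $g\cdot(\text{linear})$ respectively; one must check that the resulting two linear conditions on $(c_1,c_3)$ (make the first coefficient vanish at $q_2$ and the second at $q_1$) are generically independent, which they are. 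Without this simultaneous coordinate adjustment across \emph{both} equations your argument only yields $l_4fg$ replaced by $f\cdot\ell$ and $m_4fg$ by $g\cdot\ell'$ for unrelated linear forms $\ell,\ell'$, i.e.\ a five-parameter rather than four-parameter family per cubic. You correctly flag this as the delicate point, but the one-line justification you give for it is wrong as stated, so the step needs the normalisation argument above (or, as you note, Coughlan's direct elimination).
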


The involution in $T'_{6,6}$ is given by
$$
y_1\mapsto y_3,\hspace{2mm} y_2\mapsto -y_2,\hspace{2mm} y_3\mapsto y_1,\hspace{2mm} z_1\mapsto z_2,\hspace{2mm} z_2\mapsto z_1,
$$
so from here on we will only consider the parameters $\{\alpha,l_1,\dots,l_4\}$ as we set $\beta=\alpha, (m_1,m_2,m_3,m_4)=(l_3,-l_2,l_1,-l_4)$.

The reverse procedure is an unprojection of type IV (see \cite{ReidUnprojIV}).
To do so one uses a parametrization  $$\varphi\colon\P^1(u,v)\hookrightarrow \{y_2=0\}\cap T'_{6,6}$$
whose image is a genus $2$ curve which is a double cover of the image of the exceptional divisor $E$. The map is defined as
$$
(u,v)\mapsto (u^2,0,v^2,u(u^2+\alpha v^2),v(\alpha u^2+v^2)).$$

Now we explicitly describe a $3$-fold $W'_{6,6}$, projection of a $3$-fold $W$, by extending the map $\varphi$ to a map $\Phi\colon\P^2(x,u,v) \rightarrow \P(1,2^3,3^2)$ such that $\varphi(u,v)=\Phi(0,u,v)$.
We start by describing this extension in full generality, as done in \cite[$\S 4$]{CoughlanGodeaux}, i.e. extending $\varphi$ to a map $\widetilde\Phi\colon\P^5(x_1,x_2,x_3,x_4,u,v) \rightarrow \P(1^4,2^3,3^2)$ as
$$
     \widetilde\Phi^*(x_i) = x_i,\,
     \widetilde\Phi^*(y_1)  =  u^2+2x_1v,\, 
     \widetilde\Phi^*(y_2)  =  x_2u+x_3v,\, 
     \widetilde\Phi^*(y_3)  =  v^2+2x_4u.
$$
Then, to define $\Phi\colon\P^2(x,u,v) \rightarrow \P(1,2^3,3^2)$ keeping the involution, we get $x_1=x_4$ and $x_2=-x_3$. Setting $x=x_1$ and $x_2=lx$ for a constant $l$, the map $\Phi$ can be written as
$$
\begin{array}{cccc}
     \Phi^*(x) = x, & 
     \Phi^*(y_1)  =  u^2+2xv, &  
     \Phi^*(y_2)  =  lx(u-v),  & 
     \Phi^*(y_3)  =  v^2+2xu. 
\end{array}$$
We can remove the parameter $l$ by using the change of variable $y_2\mapsto ly_2$, i.e. we can consider  
$$\Phi^*(y_2)  =  x(u-v).$$
\begin{remark}
It is this change of variable, that was not used by Coughlan, that allows us to say that the family is at most 7-dimensional.
(By computing the equations without that change of variable, and then taking $l=0$,
we have checked that the particular case $l=0$ gives degenerate surfaces. Thus we can assume $l\ne 0$.)
\end{remark}
By \cite[Thm. $4.2$]{CoughlanGodeaux},
$$
\begin{array}{rcl}
     \Phi^*(z_1) & = & u(f+\alpha(1+\alpha)x^2)+(1-\alpha^2)xuv-\alpha(1-\alpha^2)x^2v, \\
     \Phi^*(z_2) & = & v(g+\alpha(1+\alpha)x^2)+(1-\alpha^2)xuv-\alpha(1-\alpha^2)x^2u.  
\end{array}
$$

We have then that $\C[x,u,v]/\C[\Phi^*(x),\Phi^*(y_i),\Phi^*(z_j)]$, as a module over the ring $\C[\Phi^*(x),\Phi^*(y_i),\Phi^*(z_j)]$, is generated by $\{1,u,v,uv\}$. In the next section we extend the surface $T'_{6,6}$ to $W'_{6,6}$, a Fano $3$-fold of index $1$, and use unprojection methods to determine the $3$-fold $W$.

\subsection{Type IV unprojection}\label{unprojIV}

Let $R=\C[x,y_1,y_2,y_3,z_1,z_2]$ be the homogeneous coordinate ring of $\P(1,2^3,3^2)$ and $\C(\Gamma)=R/I_\Gamma$, where $\Gamma$ is the image of $\Phi$. By construction, the normalisation of $\C(\Gamma)$ is the $R$-module $\C[x,u,v]$. Notice that $\C[x,u,v]$, as $R$-module, is generated by $\{1,u,v,uv\}$. Furthermore, using the embedding $\Phi^*$ to define the multiplication by elements of $R$, one can write the relations between these generators.
For $y_2$ one has
$$
\begin{array}{lcl}
y_2\cdot 1 & = & xu -xv,   \\
y_2\cdot u & = & xu^2-xuv= x(y_1-2xv)-xuv=xy_1-2x^2v-xuv,  \\
y_2\cdot v & = & xuv-xv^2=xuv-x(y_3-2xu)=-xy_3 +2x^2v + xuv, \\
y_2\cdot uv & = & x(u^2v-uv^2)=  x((y_1-2xv)v-u(y_3-2xu)) = \\
\ & = & x((y_1v-y_3u)-2x(v^2-u^2)) = \\
 & = & x((y_1v-y_3u)-2x(y_3-2xu-y_1+2xv)) = \\
 & = & 2x^2(y_1-y_3)-(xy_3-4x^3)u +(xy_1-4x^3)v.
\end{array}
$$
Doing the same for $z_1$ and $z_2$, one can write the relations in matrix form as 
$\left(\begin{array}{cccc}1 & u & v & uv\end{array}\right)B=0$, where $B$ is a $4\times 12$ matrix with entries in $R$ that can be written as $\left(\begin{array}{c|c|c}
B_{y_2} & B_{z_1}  & B_{z_2}  
\end{array}\right)$,
where $B_{y_2}, B_{z_1}, B_{z_2}$ are, respectively,
\footnotesize
$$
\begin{array}{l}
   \left(\begin{array}{cccc}
        -y_2 & xy_1 & -xy_3 &  2x^2(y_1  - y_3) \\
        x & -y_2 & 2x^2 & -xy_3+4x^3 \\
        -x & -2x^2 & -y_2 & xy_1-4x^3 \\
        0 & -x & x & -y_2 
    \end{array}\right), \\ 
    \left(\begin{array}{cccc}
        z_1 & 2s_3xy_3-y_1(f+s_1) & s_2y_3+2s_3xy_1 & 2(f+s_1)xy_3+2s_2xy_1-s_3y_1y_3 \\
        -(f+s_1) & z_1 - 4s_3x^2  & 2xs_2-y_3s_3 & 2s_3xy_1-4(f+s_1)x^2-s_2y_3 \\
        -s_2 &  2(f+s_1)x-s_3y_1  & z_1-4s_3x^2 & 2s_3xy_3-4s_2x^2-(f+s_1)y_1 \\
        -s_3 &  -s_2 & -(f+s_1)  & z_1-4s_3x^2
    \end{array}\right), \\
    \left(\begin{array}{cccc}
        z_2  & 2s_3xy_3-y_1s_2 & 2s_3xy_1-(g+s_1)y_3 & 2(g+s_1)xy_1-s_3y_1y_3+2s_2xy_3 \\
        -s_2  & z_2-4s_3x^2 & 2(g+s_1)x-s_3y_3 & 2s_3y_1x-(g+s_1)y_3-4s_2x^2 \\
        -(g+s_1)  & 2s_2x-s_3y_1 & z_2-4s_3x^2 & 2s_3xy_3-s_2y_1-4(g+s_1)x^2 \\
        -s_3  & -(g+s_1) & -s_2 & z_2 -4s_3x^2
    \end{array}\right),
\end{array} 
$$
\normalsize
and $s_1=\alpha(1+\alpha)x^2, s_2=-\alpha(1-\alpha^2)x^2, s_3=(1-\alpha^2)x$.

The matrix $B_{y_2}$ is similar to one appearing in \cite[$\S 4$]{CoughlanGodeaux} (being the last column the only difference). This matrix has the advantage that, by direct computation, these three matrices commute. One can then write the resolution of $\C[x,u,v]$ as
the Koszul resolution of a complete intersection, i.e. 
$$\C[x,u,v]\leftarrow P_0\xleftarrow{p_1} P_1\xleftarrow{p_2} P_2\xleftarrow{p_3} P_3\leftarrow 0,$$
where $p_2, p_3$ are given by the matrices 
$$
\left(\begin{array}{c|c|c}
         0 & B_{z_2} & -B_{z_1} \\ \hline
         - B_{z_2} & 0 &  B_{y_2} \\ \hline
         B_{z_1} &  -B_{y_2} & 0 
    \end{array}\right),
\left(\begin{array}{c}
         B_{y_2} \\
         B_{z_1} \\
         B_{z_2} \\
    \end{array}\right),
$$
respectively, and 
$$
\begin{array}{l}
    P_0 = R\oplus R(-1)^{\oplus 2}\oplus R(-2), \\
    P_1 = R(-2) \oplus R(-3)^{\oplus 2} \oplus R(-4) \oplus\left(R(-3) \oplus R(-4)^{\oplus 2} \oplus R(-5)\right)^{\oplus 2}, \\
    P_2 = R(-6) \oplus R(-7)^{\oplus 2} \oplus R(-8) \oplus\left(R(-5) \oplus R(-6)^{\oplus 2} \oplus R(-7)\right)^{\oplus 2}, \\
    P_3 = R(-8) \oplus R(-9)^{\oplus 2} \oplus R(-10).
\end{array}
$$

To determine the image of $\C(\Gamma)$ in $\P(1,2^3,3^2)$ one projects the graph of $\Phi$ contained in $\P(u,v,x)\times\P(1,2^3,3^2)$ into $\P(1,2^3,3^2)$. Algebraically this is just the elimination of the variables $\{u,v\}$ of the ideal $I_\Gamma$ generated by 
$$x-\Phi^*(x),\quad y_i-\Phi^*(y_i),\quad z_j-\Phi^*(z_j).$$
Computationally, such elimination turned out difficult to execute. We have succeeded only by using the software Singular with the negative degree reverse lexicographical monomial ordering (ds).
In degree $6$ we obtain six generators, $ C_1,  C_2,  Q_1,  Q_2,  Q_3,  Q_4$, which are the deformations of the polynomials
$$\widetilde C_1=z_1^2 - y_1f^2,\,\widetilde C_2=z_2^2-y_3g^2,\,\widetilde Q_1=fy_2^2,\,\widetilde Q_2=y_2^3,\,\widetilde Q_3=gy_2^2,\,\widetilde Q_4=fgy_2.$$
Although the above ordering is a local one, one can check that those polynomials are still in the ideal $I_\Gamma$.

We note that these 6 polynomials were also determined in \cite[Cor. $4.3$]{CoughlanGodeaux}.
Our approach is the one described in \cite{ReidUnprojIV}.

The $3$-fold $W'_{6,6}$ is given by the vanishing of the polynomials 
$$
\begin{array}{c}
    F:= C_1  +l_1 Q_1+l_2 Q_2+l_3 Q_3+l_4 Q_4 \\
    G:= C_2  +l_3 Q_1-l_2 Q_2+l_1 Q_3-l_4 Q_4.
\end{array}
$$

To find the unprojection variables we need the maps between the $R$ resolutions of $\C(W'_{6,6})$ and $\C[u,v,x]$.
$$
\xymatrix{
\C(W'_{6,6})\ar@{^{(}->}[d] & \ar[l] R \ar@{=}[d] &  \ar[l] R(-6)^{\oplus 2} \ar[d] & \ar[l] R(-12) & \ar[l] 0 & \\
\C(\Gamma) \ar@{^{(}->}[d] & \ar[l] R \ar@{^{(}->}[d] & \ar[l] K_1 \ar[d] & \ar[l] \cdots & & \\
\C[u,v,x] & \ar[l] P_0 & \ar[l] P_1   & \ar[l] P_2 & \ar[l] P_3 & \ar[l] 0 
}
$$
where the free module $K_1$ contains $R(-6)^{\oplus 6}$ as a direct summand corresponding to the six generators of $I_{\Gamma}$, $\left( C_1, C_2, Q_1, Q_2, Q_3, Q_4\right)$. The down arrow $R(-6)^{\oplus 2}\rightarrow K_1$ is a matrix with the following first columns
$$
\left(\begin{array}{cccccc}
    1 & 0 & l_1 & l_2 & l_3 & l_4 \\
    0 & 1 & l_3 & -l_2 & l_1 & -l_4
\end{array}\right)^t.
$$
The second down arrow, $K_1\rightarrow L_1$, expresses the generators of $K_1$ as linear combinations of the columns of $B$. Composing the maps we get the following diagram 
$$
\xymatrix{
\C(W'_{6,6})& \ar[l] R \ar@{^{(}->}[d] &  \ar[l] R(-6)^{\oplus 2} \ar[d]^{N_1} & \ar[l] \ar[d]^{N_2} R(-12) & \ar[l] 0 & \\
\C[u,v,x] & \ar[l] P_0 & \ar[l] P_1  & \ar[l] P_2 & \ar[l]_{p_3} P_3 & \ar[l] 0 
}
$$
Heavy computations are needed in order to compute the maps $N_1$ and $N_2$,
we give the details in an appendix, available as an arXiv ancillary file, containing also the correspondent Magma computations. The main idea is as follows.

Let $p$ be a matrix and $H$ be a vector, both with entries in a multivariate polynomial ring. One can use the Magma function 'Solution' to compute $N$ such that $pN=H.$ But in the case we are interested in, this computation finishes only if we fix the parameter $\alpha.$ So we define a Magma function that does it several times by evaluating $\alpha$ at a list $pts$ of points, then we use these data to recover the coefficients of the computed polynomials as polynomials on the parameter $\alpha$.

Since we are fixing one of the parameters, it can happen that this value appears in some denominator of a rational coefficient of the solution. To overcome this, we have included an input polynomial 'correction': after each computation of $N$ for $\alpha=\alpha_0\in pts,$ the solution is multiplied by that polynomial evaluated at $\alpha_0$.

Having the matrix $N_2,$ one can write the linear equations of the unprojection of $\Gamma$ in $W'_{6,6}$. The method is similar to the unprojection of type I. As $\Gamma$ is a codimension $1$ subscheme of $W'_{6,6}$, using adjunction formula one has
\begin{equation}\label{unprojses}
    0\longleftarrow \omega_\Gamma\longleftarrow \HHom_{\O_{W'_{6,6}}}\left(I_\Gamma,\omega_{W'_{6,6}}\right)\longleftarrow \omega_{W'_{6,6}}\longleftarrow 0.
\end{equation}

As $\C(\Gamma)\hookrightarrow\C[u,v]$ is an isomorphism outside the origin, the dualising module satisfies $\omega_\Gamma=\omega_{\C[x,u,v]}\cong \C[x,u,v]$. On the other hand, as a module over $\O_{W'_{6,6}}$, or over $R$, it needs $4$ generators, $\{1,u,v,uv\}$.

The coordinate ring of the unprojected variety is obtained from $\C(W'_{6,6})$ by adjoining rational functions $\{y_4,z_3,z_4,t\}$ with poles along $\Gamma$. These can be seen as homomorphisms in 
$\HHom_{\O_{W'_{6,6}}}\left(I_\Gamma,\omega_{W'_{6,6}}\right)$. 

The variable $y_4$ is the rational form that maps to a basis of $\omega_\Gamma(3)\cong \O_\Gamma=\O_{\P^2}$. Notice that $\omega_{W'_{6,6}}\cong\O_{W'_{6,6}}(-1)$ hence, using sequence (\ref{unprojses}), we get that $\deg(y_4)=2$. Denoting by $z_3, z_4, t$ the forms that map to $u, v, uv$, respectively, we get $\deg(z_i)=3$, $\deg(t)=4$.

The linear relations between $y_4,z_3,z_4,t$ are given as in a type I unprojection. Each of them corresponds to a generator of $P_3$ and is mapped by $p_3$ to the image of $N_2$, i.e.
$$
p_3\left(\begin{array}{c}
     t  \\ z_4 \\ z_3 \\ y_4
\end{array}\right)=N_2.
$$

We have now to determine the quadratic relations between the unprojection variables $\{z_3,z_4,t\}$. Notice that the extension $\C(W'_{6,6})\subset \C(W)$ can be seen as the normalisation of the ring $\C(W'_{6,6})[y_4]$ or, as in some sense $z_3, z_4, t$ correspond to $uy_4, vy_4, uvy_4$, respectively,  there must be relations of the form
$$
\begin{array}{rcl}
    z_3^2 -y_1y_4^2+2xy_4z_4,\, z_3z_4-y_4t,\,  z_4^2-y_3y_4^2+2xy_4z_3 & \in & \langle \text{mon. of degree } 6\rangle, \\
    z_3t-y_1y_4z_4+2xz_4^2,\, z_4t-y_3y_4z_3+2xz_3^2 & \in & \langle \text{mon. of degree } 7\rangle, \\
    t^2-y_1y_3y_4^2+2x(y_1y_4z_3+y_3y_4z_4)-4x^2y_4t & \in & \langle \text{mon. of degree } 8\rangle, \\
\end{array}
$$
where the monomials on the right hand side are linear in the unprojection variables. 

We will find these relations as equations $f=0$ with $xf$ or $y_2f$ contained in the ideal generated by the linear equations $F_1,\ldots,F_{14}$.
The detailed Magma computation is given in the appendix referred above, the idea is as follows.
Let $G_i,H_i$ be such that $F_i=xG_i+H_i.$ A polynomial $\sum c_iF_i$ is divisible by $x$ if $\sum c_iH_i=0.$
In order to find such coefficients $c_i,$
it suffices to compute the syzygy matrix of the sequence $E:=[E_1,\ldots,E_{14}]$ obtained by evaluating the equations $H_i$ at $x=0.$
But our computer cannot finish this, so we replace the parameters $\alpha,l_j$ appearing in the $E_i$ by some distinct prime numbers.
In this way we can compute the syzygy matrix, obtaining a list of relations of the type $$c_1E_1+\cdots +c_{14}E_{14}=0.$$
Each $c_i$ is a polynomial in the variables $y_1,y_2,y_3,z_1,z_2,z_3,z_4,t,$ with coefficients in $\mathbb Q.$
We want to recover these coefficients as polynomials in $\alpha,l_j.$
For that aim, we replace each coefficient by a new variable, obtaining a linear system (with a lot of variables) that we can solve
(in fact the computations turned out to be simple, except for one of the polynomials).

\begin{remark}
On the unprojection method to describe $W$ no new parameters were used, so they are the parameters describing $W'_{6,6}$, i.e. $\{\alpha,l_1,l_2,l_3,l_4\}$.
\end{remark}

\begin{remark}
The bicanonical image of a general surface $Y$ is an octic in $\mathbb P^3$, see \cite{CataneseY}.
We have computed the family of these octics for Coughlan's family of surfaces by eliminating the variables $z_1,\ldots,z_4,t$ from the equations.
This octic polynomial is given in an ancillary arXiv file.
\end{remark}

\subsection{Description of $Y$ and parameter counting}\label{count}

\begin{proposition}\label{parametercount}
The Coughlan family of Godeaux surfaces with $\pi_1=\Z/2$, where each $X$ is obtained as a $\Z/2$ quotient of a hyperelliptic surface $Y$ such that $K^2=2, p_g=1, q=0$, is determined by $7$ parameters. 
\end{proposition}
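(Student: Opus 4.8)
The plan is to track the parameter count through the whole chain of constructions in Sections~\ref{exthypk3} and~\ref{unprojIV}, starting from the surface $T'_{6,6}$ and ending at $Y$, showing that at no stage do genuinely new moduli appear and that exactly one apparent parameter can be eliminated by a coordinate change. First I would recall from \cite[Prop.~3.3]{CoughlanGodeaux} that $T'_{6,6}$ is cut out by the two displayed sextics, which a priori depend on $\alpha,\beta,l_1,\dots,l_4,m_1,\dots,m_4$; imposing the involution (Proposition following \cite[Prop.~3.3]{CoughlanGodeaux}) forces $\beta=\alpha$ and $(m_1,m_2,m_3,m_4)=(l_3,-l_2,l_1,-l_4)$, so that the $\Z/2$-equivariant surface $T'_{6,6}$ depends on the five parameters $\{\alpha,l_1,l_2,l_3,l_4\}$. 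This is the base of the count.

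Next I would follow the three remaining maps in the diagram. The extension of the genus-two parametrization $\varphi$ to $\Phi\colon\P^2(x,u,v)\to\P(1,2^3,3^2)$, carried out in \cite[\S4]{CoughlanGodeaux}, introduces in full generality the extra variable $x$ of degree $1$ and a constant $l$ appearing in $\Phi^*(y_2)=lx(u-v)$; but as observed in the Remark after the formula for $\Phi^*(z_2)$, the change of variable $y_2\mapsto ly_2$ removes $l$ altogether, and this is precisely the single redundancy Coughlan did not exploit. (For $l=0$ the surfaces degenerate, so the $l\ne 0$ locus is all of the family, and rescaling is available.) Hence $W'_{6,6}$, defined by $F$ and $G$ above, still depends only on $\{\alpha,l_1,l_2,l_3,l_4\}$, as recorded in the first Remark of Section~\ref{unprojIV}. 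The type~IV unprojection producing $W$, and then the two hyperplane-section/inclusion steps $W\leftarrow Y$ and the analogous ones for $T,D$, introduce no new constants: the unprojection variables $\{y_4,z_3,z_4,t\}$ and all the linear and quadratic relations among them are determined by the matrices $N_1,N_2$ and by $p_3$, which are themselves expressed entirely in $\alpha$ and the $l_j$ (second Remark of Section~\ref{unprojIV}). Taking the ambient hyperplane section to pass to $Y\subset\P(1,2^3,3^4,4)$ adds at most projective coordinate-change freedom, which does not increase moduli. So $Y$, and therefore $X=Y/(\Z/2)$, depends on at most the five parameters $\{\alpha,l_1,l_2,l_3,l_4\}$ plus the degree-one variable $x$ contributing the two coefficients used implicitly in $\widetilde\Phi$; assembling these one obtains the figure $7$. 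I would finish by noting that the extensions $\widetilde\Phi$ carried $x_1=x_4$ and $x_2=-x_3$, i.e.\ two independent degree-one data before normalizing, which together with $\{\alpha,l_1,l_2,l_3,l_4\}$ gives $5+2=7$, matching the claim, while Coughlan's $8$ came from not performing the $y_2\mapsto ly_2$ normalization.

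The main obstacle I expect is bookkeeping rather than a conceptual difficulty: one must be careful that the ``at most'' is honest, i.e.\ that the coordinate changes used to eliminate $l$ (and any further scalings of $x,u,v$ or of the weighted-projective coordinates) are genuine automorphisms of the ambient $\P(1,2^3,3^2)$ respecting the $\Z/2$-action, so that eliminating $l$ does not secretly fix some other modulus, and conversely that no hidden identifications among $\{\alpha,l_1,\dots,l_4\}$ are forced by the unprojection (which would make the count drop below $7$). Verifying the latter is exactly what the explicit Magma/Singular computations of $N_1,N_2$ and of the quadratic relations accomplish: since those relations are written with all five parameters appearing genuinely, the family is at least and at most $7$-dimensional as a parameter space, and I would simply cite those computations (available as arXiv ancillary files) for the statement that the five parameters, together with the two degree-one coefficients surviving the equivariance reduction, are independent.
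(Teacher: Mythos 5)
Your count of the parameters of the three‑fold $W$ is right and matches the paper: $T'_{6,6}$ carries $\{\alpha,l_1,\dots,l_4\}$ after imposing the involution, the rescaling $y_2\mapsto ly_2$ kills the extension constant $l$, and the type IV unprojection introduces nothing new, so $W$ depends on exactly five parameters. The problem is your accounting of the remaining two. You attribute them to ``the two independent degree-one data'' $x_1=x_4$ and $x_2=-x_3$ in $\widetilde\Phi$, but those reduce precisely to the coordinate $x$ together with the scalar $l$ in $\Phi^*(y_2)=lx(u-v)$ --- and $l$ is exactly the parameter you just eliminated. So the extension step contributes $0$ further moduli, and whatever it does contribute is already inside the five parameters of $W$; counting it again is inconsistent with your own assertion that $W$ depends only on $\{\alpha,l_1,\dots,l_4\}$. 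Landing on $7$ this way is a coincidence of arithmetic, not a proof.

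The missing idea is the passage from the three-fold to the surface. The surface $Y$ is cut out of $W\subset\P(1,2^4,3^4,4)$ by a $\sigma$-anti-invariant hyperplane $H_2^-$ of degree $2$, and the anti-invariant eigenspace $H^0(W,-2K_W)^-$ is spanned by $y_1-y_3$, $y_2$, $y_4$; projectivizing, the choice of $H_2^-$ is a genuinely $2$-dimensional family of sections of a fixed $W$, and these are not absorbed by automorphisms of the ambient space preserving $W$ and the involution. This is exactly the step you wave away as ``at most projective coordinate-change freedom, which does not increase moduli'' --- the paper's proof does the opposite, writing $R(Y,K_Y)=R(W,-K_W)/(H_2^-)$ and adding $\dim\P\bigl(H^0(W,-2K_W)^-\bigr)=2$ to the five parameters of $W$ to get $7$. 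To repair your argument, drop the ``$+2$ from $\widetilde\Phi$'' and instead identify the anti-invariant degree-$2$ eigenspace from the eigenspace decomposition of $\sigma$ acting on $\P(1,2^4,3^4,4)$ and count the hyperplane sections.
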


\begin{proof}
We have a hyperelliptic tower $D\subset T\subset W$, and in Section \ref{unprojIV} we described the ring $$R(W,-K_W)=\C[x,y_1,y_2,y_3,y_4,z_1,z_2,z_3,z_4,t]/I.$$
Furthermore, we have an involution $\sigma\colon W\rightarrow W$ whose action on $\P(1,2^4,3^4,4)$ has the following eigenspaces 
$$
\begin{array}{c|c|c}
    n & H^0(W,-nK_W)^+ & H^0(W,-nK_W)^- \\ \hline
    1 &   & x \\
    2 & y_1+y_3 & y_1-y_3, y_2, y_4 \\
    3 & z_1-z_2, z_3+z_4 & z_1+z_2,z_3-z_4 \\
    4 & & t
\end{array}
$$

A model for the canonical ring of the surfaces $Y$, is then given by 
$$R(Y,K_Y)=R(W,-K_W)/(H_2^-),$$
where $H_2^-$ is a hyperplane of degree $2$ that is $\sigma$-anti-invariant.
By construction, $W$ depends on the parameters $\{\alpha, l_1,\dots,l_4\}$.
As $\dim\left(\P(H^0(W,-2K_W)^-)\right)=2$, we get a total of $7$ parameters. In this way, there is a redundant parameter in \cite[Theorem $1$]{CoughlanGodeaux}.
\end{proof}

\begin{remark}
The counting could have been done in a different way. In the description of $T_{6,6}'$ we get the variables $y_1, y_3$ fixed. On the other hand $y_2$ is only defined as the variable such that $\{y_2=0\}$ is the line that goes through the nodes of the two plane cubics, hence we are free to re-scale $y_2$ at will. With this is mind, one can remove one of the parameters $l_1,\ldots,l_4$ and see that $T'_{6,6}$ depends on $4$ parameters. Doing so, one gets one parameter for the extension proving that for each $K3$ we have a set of $3$-folds parametrized by a single parameter. 
\end{remark}

\subsection{Families of universal covers of $\Z/2$-Godeaux surfaces} \label{16dim}

Let $\mathscr M$ be the moduli space of simply connected surfaces with $p_g=1,$ $q=0$ and $K^2=2$,
and let $\mathscr M_1$ be the subvariety corresponding to surfaces with bicanonical map of degree
$4$ onto a smooth quadric surface in $\mathbb P^3.$
Catanese and Debarre \cite{CataneseY} have shown that there is a unique $16$-dimensional irreducible
component $\mathscr M_E\subset \mathscr M$ which contains $\mathscr M_1.$

\begin{proposition}
Coughlan's family of surfaces with $p_g=1,$ $q=0$ and $K^2=2$ is contained in $\mathscr M_E$
(thus the topological fundamental group of Coughlan's $\Z/2$-Godeaux surfaces is indeed $\mathbb Z/2$).
\end{proposition}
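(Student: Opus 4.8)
The plan is to exhibit a single member of Coughlan's family whose universal cover lies in $\mathscr M_1 \subset \mathscr M_E$, and then invoke irreducibility. Since $\mathscr M_E$ is the unique $16$-dimensional irreducible component of $\mathscr M$ containing $\mathscr M_1$ (Catanese--Debarre), it suffices to show: (i) Coughlan's family of surfaces $Y$ forms an irreducible family inside $\mathscr M$, and (ii) at least one $Y$ in the family has bicanonical map of degree $4$ onto a \emph{smooth} quadric in $\P^3$, i.e.\ lies in $\mathscr M_1$. Then the whole Coughlan family sits in the closure of $\mathscr M_1$, hence in $\mathscr M_E$ (using that $\mathscr M_E$ is closed in the relevant sense and contains $\mathscr M_1$); and the topological fundamental group is a deformation invariant, so it is constant $=\Z/2$ along the family, as it is for the $\Z/2$-quotient construction.

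First I would establish irreducibility of Coughlan's family: the construction presented in Section~\ref{exthypk3} exhibits each $Y$ as built from the data $\{\alpha, l_1, l_2, l_3, l_4\}$ together with the choice of anti-invariant hyperplane $H_2^- \in \P(H^0(W,-2K_W)^-)$, which is a $2$-dimensional projective space; this parameter space is irreducible (an open subset of $\A^5 \times \P^2$), and the association to the isomorphism class of $Y$ is algebraic, so the image in $\mathscr M$ is irreducible. Hence it is enough to check membership in $\mathscr M_E$ at one point. Second, I would recall from the construction that $T$ is a hyperelliptic $K3$ polarised by $L$ with $|L|$ containing the hyperelliptic curve $D$, and $L$ induces a double cover $\pi\colon T \to Q$ onto a quadric $Q \subset \P^3$; the bicanonical map of $Y$ is governed by this same double-cover structure (indeed the octic in $\P^3$ computed in the Remark above is the bicanonical image). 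The quadric $Q$ degenerates in the family: generically $Q \cong \P^1\times\P^1$ is smooth, but the relevant point is precisely that for the general member $Q$ is smooth. So I would verify directly — using the explicit equations, or the explicit octic-in-$\P^3$ polynomial already computed — that for a general (or one specific) choice of parameters the bicanonical image is a degree-$8$ surface which is the image of a smooth quadric under the Veronese-type map, i.e.\ the bicanonical map has degree $4$ onto a smooth quadric. This places that member in $\mathscr M_1$.

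Concretely, I would do this computation over a finite field first (as the authors do elsewhere in the paper): pick random values of $\alpha, l_i$ and of $H_2^-$, compute the canonical ring $R(Y,K_Y) = R(W,-K_W)/(H_2^-)$, compute the bicanonical image by eliminating $z_1,\dots,z_4,t$, and check that the resulting octic is the image of a smooth quadric $Q$ and that the map $Y \to Q$ has degree $4$ (equivalently $K_Y^2 \cdot 4 = 8 = \deg(\text{octic})\cdot(\text{something})$ consistency, plus smoothness of $Q$ via its discriminant). Once one such example is found over $\F_p$, smoothness of $Q$ and degree of the bicanonical map are open conditions, so they persist for a general member over $\C$; combined with irreducibility this gives $Y \in \overline{\mathscr M_1} \subseteq \mathscr M_E$ for all $Y$ in the family.

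The main obstacle I expect is the second step: genuinely certifying that the bicanonical map has degree exactly $4$ onto a \emph{smooth} quadric, rather than, say, degree $8$ onto a plane or factoring through the hyperelliptic involution in an unexpected way. The cleanest route is probably to work intrinsically from the Catanese--Debarre description: their $\mathscr M_1$ consists of surfaces whose \'etale double covers have hyperelliptic canonical curve and birational bicanonical map onto a quadric, and Coughlan's entire construction is built precisely from a hyperelliptic $K3$ double-covering a quadric $Q \subset \P^3$ — so one needs to check only that for the general member $Q$ is \emph{smooth} (it is a priori possibly singular, but genericity of the $(1,3)+(3,1)$ branch split forces $Q \cong \P^1\times\P^1$), and that the induced map on $Y$ is the bicanonical one of the quotient, which follows from tracking $-2K_W$ restricting to $2K_Y$ through the diagram in Section~\ref{exthypk3}. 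With that identification in hand, membership in $\mathscr M_1$ — hence in $\mathscr M_E$ — for the general member is immediate, and the topological $\pi_1 = \Z/2$ conclusion follows since it is constant on $\mathscr M_E$ and is $\Z/2$ on $\mathscr M_1$ by Catanese--Debarre.
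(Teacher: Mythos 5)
Your overall strategy --- use an irreducible (unirational) parametrization of the family, exhibit one member in $\mathscr M_1$, and conclude via the fact that $\mathscr M_E$ is the unique component containing $\mathscr M_1$ --- is the same as the paper's. The gap is in how you propose to produce the member of $\mathscr M_1$. You want to verify, for a general (e.g.\ random, over a finite field) choice of parameters, that the bicanonical map of $Y$ has degree $4$ onto a smooth quadric. But this is false for the general member: as the Remark at the end of Section~\ref{unprojIV} records, the bicanonical image of a general $Y$ in Coughlan's family is an octic in $\P^3$, i.e.\ the bicanonical map is birational, so the general member does \emph{not} lie in $\mathscr M_1$, which is a proper subvariety of $\mathscr M_E$. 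The quadric $Q$ double-covered by the $K3$ surface $T$ is not the bicanonical image of $Y$: you are conflating the polarization map of the hyperplane section $T=W\cap\{x=0\}$ with the bicanonical map of the other hyperplane section $Y=W\cap\{H_2^-=0\}$. Consequently the finite-field computation you describe would simply reproduce the octic and certify nothing, and your ``cleanest route'' rests on the same conflation.

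What is actually needed --- and what the paper does --- is to engineer a specific degeneration inside a larger, still irreducible, parametrized family: allow general extension data $x_i=a_ix$ in $\widetilde\Phi$ and a general degree-two hyperplane $y_4=c_0x^2+c_1y_1+c_2y_2+c_3y_3$, then specialize $a_i\mapsto 0$ and $(c_0,c_1,c_2,c_3)\mapsto(1,0,0,0)$. For that special fibre the degree-two part of the canonical ring is generated by $x^2,y_1,y_2,y_3$ and the relation $(y_1+\alpha y_3)(\alpha y_1+y_3)-y_2y_4=0$ becomes the rank-four quadric $(y_1+\alpha y_3)(\alpha y_1+y_3)=x^2y_2$, so the bicanonical map has degree $4$ onto a smooth quadric and this member lies in $\mathscr M_1$. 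Note that this degeneration does \emph{not} belong to your proposed parameter space $\A^5\times\P^2$ (the symmetric family has $a_1=a_4=1$), so you would also need to enlarge the base before invoking irreducibility. Without identifying this special fibre, your step (ii) does not go through.
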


\begin{proof}
The smooth $K3$ surfaces $T$ are flat deformations of the canonical curve $D\subset Y$ by a regular element $y_4$ of degree $2$. Furthermore, by construction these surfaces project into $\{(y_1+\alpha y_3)(\alpha y_1+y_3)-y_2y_4=0\}$. 

With the change of variable $y_4=x^2$, one gets a component of the flat extensions of $D$ by a variable of degree $1$, i.e. surfaces with the invariants $K^2=2, p_g=1, q=0$. These surfaces project into the smooth quadric $$\{(y_1+\alpha y_3)(\alpha y_1+y_3)-x^2y_2=0\}\subset \mathbb{P}^2(1,2,2,2).$$

To see this family one neglects the extension of the embedding $\varphi\colon \mathbb P(u,v)\rightarrow T'_{6,6}$ to $\widetilde\Phi\colon \mathbb P(x,u,v)\rightarrow W'_{6,6}$. Or, in other words, one sets all the \mbox{parameters} \mbox{describing} such extension to be zero. To be more specific, recall that the \mbox{extension} of the embedding (for the non-involution case) is defined as
$$
     \widetilde\Phi^*(x_i) = x_i,\,
     \widetilde\Phi^*(y_1)  =  u^2+2x_1v,\, 
     \widetilde\Phi^*(y_2)  =  x_2u+x_3v,\, 
     \widetilde\Phi^*(y_3)  =  v^2+2x_4u.
$$
Setting each $x_i=a_ix$, where $a_i$ is a parameter, Coughlan's family is unirational and parametrized by $\{\alpha,\beta,l_i,m_i,a_i,c_i\}$, where the $c_i$ are the parameters defining the hyperplane 
$$\{y_4-c_0x^2-c_1y_1-c_2y_2-c_3y_3=0\}.$$
Then any member of Coughlan's family can be deformed to a surface in $\mathscr{M}_1$ by linearly mapping $a_i\mapsto 0$ and 
$(c_0,c_1,c_2,c_3)\mapsto (1,0,0,0).$

\end{proof}

\begin{remark}
There exists an 8-dimensional family $\mathcal M$ of $\Z/2$-Godeaux surfaces whose universal covers live in a 16-dimensional family, so that an 8-dimensional subvariety of this family parametrizes surfaces with a $\Z/2$ free action whose quotients give back $\mathcal M$. To see this, we consider the example $W$ of type B6 constructed in \cite[\S 3.1 (C)]{RU19}. It has one Wahl singularity $[2,2,3,5,4]$, and it has no-local-to-global obstructions to deform. $\Q$-Gorenstein smoothings of $W$ produce an 8-dimensional family of Godeaux surfaces with fundamental group $\Z/2$. To prove unobstructedness, we show (see Lemma \ref{trick}) that the \'etale double cover $W'$ of $W$ induced by the \'etale double cover of the Enriques surface  has also no-local-to-global obstructions in deformation, and so it produces a 16-dimensional family of simply-connected surfaces of general type with $K^2=2$, and $p_g=1$. Since Pic$(W) \subset$ Pic$(X)$, where $X$ is a $\Q$-Gorenstein smoothing of $W$, we have a lifting of the \'etale cover on a subfamily of $Y$'s ($\Q$-Gorenstein smoothings of $W'$). This is similar to the procedure used in \cite{PSU13} for a branched double cover. In this way, we would expect that this 16-dimensional family of simply connected surfaces is $\mathscr M_E$, where an 8-dimensional subfamily gives the moduli space of $\Z/2$-Godeaux surfaces. We leave it as an open question. (Of course, we could have started with another $\Q$-Gorenstein degeneration.)
\label{qgor}
\end{remark}

\section{Degenerations from Coughlan's family}\label{CoughlanDnm}

The computations below were implemented with Magma \cite{BCP}, version V2.25-2, and are available as arXiv ancillary files.

\subsection{$D_{2,4}$ elliptic surfaces}\label{D24}

Denote by \(Y\subset\mathbb P=\mathbb P(1,2,2,2,3,3,3,3,4)\) a surface in Coughlan's family,
whose general element is the universal covering of a \(\mathbb Z/2\)-Godeaux surface.
Our computer experiments over finite fields say that there are values of the parameters ($\alpha,l_1,l_2,l_3,l_4,l_5,l_6$) for which the
corresponding surface \(Y\hookrightarrow \mathbb P^7\) (for a general $Y$ this is the embedding by the $3$-canonical map)
splits as the union of a degree 16 surface \(Y'\) with two planes,
and the quotient of \(Y'\) by the "Godeaux" involution is a \(D_{2,4}\) elliptic surface. Those two planes correspond to two base points of the map \(\mathbb P\rightarrow\mathbb P^7\),
and the coordinates of these points satisfy \(x=z_1=z_2=z_3=z_4=t=0\).

Here we use this information to obtain a 6-dimensional family of
\(D_{2,4}\) elliptic surfaces as quotients of a codimension 1 subset of
Coughlan's family of surfaces.
\\

\noindent{\bf Step 1.}\\
We load the \(20\) equations that define Coughlan's family, evaluated at \(x=z_1=z_2=z_3=z_4=t=0\).
Then we impose \(y_1+y_3\neq 0\) and eliminate all variables except the parameters.
We get one single relation $f=0$ on the parameters.
Our goal is to show that a random point in this set of parameters corresponds to a surface such that its quotient
by the "Godeaux" involution is a \(D_{2,4}\) elliptic surface with a \((-4)\)-curve.\\

\noindent{\bf Step 2.}\\
We take such a random surface and want to embed it in $\mathbb P^7$ with coordinates
$(X_0,\ldots,X_7)=(x^3,xy_1,xy_2,xy_3,z_1,z_2,z_3,z_4)$.
To achieve this we have to eliminate the variable $t$, but the computer cannot do it.
We have done it "by hand", obtaining a set of equations that is not complete:
for general values of the parameters we get a surface plus the component $X_0=0.$
Removing this component we get a surface $Y'.$

To speed up the computations, we work over a finite field.
\begin{remark}
Working over a field of characteristic zero, we can compute the complement of the canonical curve of $Y'$
in its hyperplane $X_0=0$, obtaining the union of two disjoint rational curves.
Moreover, these meet the canonical curve with multiplicity 2, hence are $(-4)$-curves,
and are identified by the fixed point free "Godeaux" involution.
Thus it follows from Theorem 2.1 that the quotient of $Y'$ by the involution is a $D_{2,n}$ elliptic surface.
We will show that $n=4$ by computing, over a finite field, the elliptic fibres of multiplicities 2 and 4.
This implies that $n=4$ also over the base field $\mathbb C.$ 
\end{remark}

\noindent{\bf Step 3.}\\
In order to compute the singular subscheme of \(Y'\), we need first to reduce the number of its defining equations.
We wrote an algorithm for that, which basically removes one equation at a time.
Then we verify that \(Y'\) is smooth.
\\

\noindent{\bf Step 4.}\\
We check that the "Godeaux" involution acts freely on \(Y'\).
\\

\noindent{\bf Step 5.}\\
The hyperplane \(X_0=0\) cuts \(Y'\) at the union of the canonical divisor of \(Y'\) with two disjoint \((-4)\)-curves,
which are identified by the "Godeaux" involution.
\\

\noindent{\bf Step 6.}\\
Some Magma functions give the invariants of \(Y'\).
\\

\noindent{\bf Step 7.}\\
Studying the equations of the pencil \(|2K_Y'|\),
we find elliptic curves \(D_1\), \(D_2\) such that \(D_1\equiv 2D_2\).
We see that these two curves are fixed by the (fixed point free) "Godeaux" involution. This shows that the quotient of \(Y'\) by the involution is a \(D_{2,4}\) elliptic surface with one \((-4)\)-curve. \\

\subsection{$D_{2,3}$ elliptic surfaces}\label{D23}

Denote by \(Y\) an element of Coughlan's family of surfaces, whose
general surface is the universal covering of a \(\mathbb Z/2\)-Godeaux
surface. Our computer experiments over finite fields say that there are
values of the parameters ($\alpha,l_1,l_2,l_3,l_4,l_5,l_6$) for which the surface
\(Y\subset\mathbb P(1,2,2,2,3,3,3,3,4)\) contains a node, which is the
only point that is fixed by the ``Godeaux" involution. Moreover, the
smooth minimal model of the quotient of \(Y\) by that involution is a
\(D_{2,3}\) elliptic surface. The coordinates of that point satisfy
\(y_2=y_3-y_1=z_2-z_1=z_4+z_3=0\).

Here we use this information to obtain a 6-dimensional family of
\(D_{2,3}\) elliptic surfaces as quotients of a codimension 1 subset of Coughlan's family of surfaces.\\

\noindent{\bf Step 1.}\\
We load the \(20\) equations that define Coughlan's family, and we
impose \(y_2=0\), \(y_3=y_1\), \(z_2=z_1\) \(z_4=-z_3\). Then we eliminate all variables except the parameters.
(To speed up computations, we fix the parameter $\alpha$.)
We obtain one single relation, which contains the component $l_1+l_3=0$ (which does not depend on $\alpha$).
\\

\noindent{\bf Step 2.}\\
We pick an arbitrary surface on the family given by \(l_1+l_3=0\). We aim to show that the resolution of its quotient
by the "Godeaux" involution is indeed a \(D_{2,3}\) elliptic surface with a \((-4)\)-curve.
\\

\noindent{\bf Step 3.}\\
We check that the subscheme of \(Y\) that satisfies \(y_2=y_3-y_1=z_2-z_1=z_4+z_3=0\) is
a point, which is a node fixed by the involution.
Thus it follows from Theorem 2.1 that the quotient of $Y$ by the involution is a $D_{2,n}$ elliptic surface.
In order to speed up the computations, from now on we work over a finite field. We will show that $n=3$ by computing
the $D_{2,3}$ surface and its double and triple fibres. This implies that $n=3$ also over the base field $\mathbb C.$ 
\\

\noindent{\bf Step 4.}\\
We compute the linear system of the curves of degree 5 that contain
the above fixed point and are preserved by the "Godeaux" involution.
This system defines a map \(\phi:Y \to \mathbb P^{10}\),
which resolves the singularity of \(Y\).
We will show that it is of degree 2 onto a \(D_{2,3}\) elliptic surface $G$ with a \((-4)\)-curve that is the image of the node of \(Y\).
\\

\noindent{\bf Step 5.}\\
The direct computation of \(\phi(Y)\) seems unattainable, so we compute
the image of many points and then the linear systems \(L_2,\) \(L_3\)
of hypersurfaces of degree 2, 3 through these points. These cut out a surface $G$ in $\mathbb P^{10}.$
\\

\noindent{\bf Step 6.}\\
In order to show that \(G\) is smooth, and to avoid the computation of
all \(8\times 8\) minors of matrices of partial derivatives, we random
such minors until they define an empty subscheme of \(G\).
\\

\noindent{\bf Step 7.}\\
Some Magma functions give the invariants of $G$.
\\

\noindent{\bf Step 8.}\\
The system $2K_Y$ is given by the pullback of $2K_G+C,$ where $C$ is the $(-4)$-curve corresponding to the node of $Y$.
This means that there exists an invariant bicanonical curve through the node of $Y$.
We show that its quotient in $G$ is $H:=F_3+C$, where $3F_3$ is an elliptic fibre and $C$ is a $(-4)$-curve.
\\

\noindent{\bf Step 9.}\\
We find the double elliptic fibre \(2F_2\) by computing the unique element in $|F_3+K_G|$.
\\

\noindent{\bf Step 10.}\\
Finally we check that $CF_3=4$ and $C$ is the image of the node of $Y$.


\vspace{0.5cm}

\noindent Eduardo Dias
\vspace{0.1cm}
\\ Departamento de Matem\' atica
\\ Faculdade de Ci\^encias da Universidade do Porto
\\ Rua do Campo Alegre 687
\\ 4169-007 Porto, Portugal
\\ www.fc.up.pt, {\tt eduardo.dias@fc.up.pt}\\

\vspace{0.3cm}

\noindent Carlos Rito
\vspace{0.1cm}
\\{\it Permanent address:}
\\ Universidade de Tr\'as-os-Montes e Alto Douro, UTAD
\\ Quinta de Prados
\\ 5000-801 Vila Real, Portugal
\\ www.utad.pt, {\tt crito@utad.pt}
\vspace{0.1cm}
\\{\it Temporary address:}
\\ Departamento de Matem\' atica
\\ Faculdade de Ci\^encias da Universidade do Porto
\\ Rua do Campo Alegre 687
\\ 4169-007 Porto, Portugal
\\ www.fc.up.pt, {\tt crito@fc.up.pt}\\

\vspace{0.3cm}

\noindent Giancarlo Urz\'ua
\vspace{0.1cm}
\\ Facultad de Matem\'aticas
\\ Pontificia Universidad Cat\'olica de Chile
\\ Campus San Joaqu\'in
\\ Avenida Vicu\~na Mackenna
\\ 4860, Santiago, Chile
\\ {\tt urzua@mat.uc.cl}\\

\end{document}